\documentclass{amsart}%
\usepackage[all]{xy}
\usepackage{amssymb}
\usepackage{amsfonts,amsthm,amsmath,amscd}
\usepackage{mathrsfs}

\newcounter{zlist}

\newcounter{blist}

\newcounter{rlist}

\input xy
\xyoption{all}
\xyoption{2cell}
\UseAllTwocells

\swapnumbers
\newtheorem{theorem}{Theorem}[section]
\newtheorem{lemma}[theorem]{Lemma}
\newtheorem{thm}[theorem]{}
\newtheorem{proposition}[theorem]{Proposition}

\newtheorem{remark}[theorem]{Remark}
\numberwithin{equation}{section}

\newcommand{\T}{{\textsf{T}}}

\newcommand{\ol}{\overline}

\newcommand{\A}{\ensuremath{\mathscr{A}}}

\newcommand{\D}{\ensuremath{\mathsf{Desc}}}

\newcommand{\xr}{\xrightarrow}
\newcommand{\ra}{\ensuremath{\xymatrix@1@C=16pt{\ar[r]&}}}
\newcommand{\dra}{\ensuremath{\xymatrix@1@C=25pt{\ar@{|->}[r]&}}}
\newcommand{\mra}{\ensuremath{\xymatrix@1@C=20pt{\ar[r] |<(0.4){\object@{|}}&}}}

\usepackage{color}

\begin{document}

\title[Descent cohomology]{Descent cohomology and factorizations of groups}

\author [Bovdi, Mesablishvili] {Victor Bovdi, Bachuki Mesablishvili}
\address{Department of  Mathematical Sciences,	UAEU,  United Arab Emirates}
\email{vbovdi@gmail.com}

\address{Department of Mathematics, Faculty of Exact and Natural Sciences
of I.~Javakhishvili Tbilisi State University, Tbilisi, Georgia}
\email{bachuki.mesablishvili@tsu.ge}

\subjclass{Primary: 18D25, 20J05, 20J06, 20J15, 20M50;
Secondary: 18C05,  18C15}

\keywords{descent cohomology; group factorization; non-abelian cohomology}

\begin{abstract} The aim of the paper is to give a full classification of factorizations of groups in terms of descent cohomology (pointed) sets introduced in \cite{Me}. We show that descent cohomology  includes Serre's non-abelian  group cohomology  as a special case. This enables us to generalize Serre's theory further to include monoids.
\end{abstract}

\maketitle

\section{INTRODUCTION}  It is well known (e.g., \cite[Chapter 17, Proposition 33]{Dummit}) that for a group $X$ and
an $X$-module $B$, there is a bijection between the first cohomology group $\mathbf{H}^1(X,B)$ and the set of
$A$-conjugacy classes of complements to $B$ in $A$, where $A$ is the semidirect product $X \!\rtimes B$ and $B$ is
identified with the subgroup $\{(\textsf{1}_X,b); b\in B\}$ of $X \!\rtimes B$. (Recall that a subgroup $B'$ of $A$
is a \emph{complement} to $B$ in $A$ if each $a\in A$ can be expressed uniquely in the form $a = bb'$, where $b \in B$ and $b' \in B'$.)
The motivation and main purpose of
this paper is to generalize this result. In particular, we investigate what happens if $B$ is no longer abelian.
In this case, the semidirect product still makes sense and it is still possible to define a \emph{pointed set}
$\mathcal{H}^1(X,B)$ which naturally generalizes the group $\mathbf{H}^1(X,B)$. This theory, called the non-abelian
cohomology theory of groups, was developed by Serre (\cite{S}). We prove in Theorem \ref{non.ab.coh.gr.ad}
that $\mathcal{H}^1(X,B)$ still classifies $A$-conjugacy classes of complements to $B$ in $X \!\rtimes B$.

This result is obtained as a particular instance of a more general result (Theorem \ref{class.eq.}) which asserts
that for a group $A$ and a (not necessarily normal) subgroup $B$ of $A$ together with the canonical inclusion $\imath_B: B \to A$,
$A$-conjugacy classes of complements to $B$ in $A$ are  in a bijective correspondence with the set $\D^1(\T^l\!\!_{\imath\!_B}, B)$.
The set $\D^1(\T^l\!\!_{\imath\!_B}, B)$ is  the set of isomorphism  classes of 1-dimensional descent cocycles of a suitably defined
monad $\T^l\!\!_{\imath\!_B}$ on the category of left $B$-sets with coefficients in the left $B$-set $B$.
$\D^1(\T^l\!\!_{\imath\!_B}, B)$ is called the 1-\emph{descent cohomology (pointed) set of} $\T^l\!\!_{\imath\!_B}$
\emph{with coefficients in}  $B$. Descent cohomology (pointed) sets were introduced in \cite{Me} and, as it is shown
in Theorem \ref{non.ab.coh.gr.}, are sufficient general to include  Serre's non-abelian group cohomology. This enables
us to generalize Serre's theory further to include monoids.

Another way of stating Theorem \ref{T-compl.} is that there is a one-to-one correspondence between the set
$\D^1(\T^l\!\!_{\imath\!_B}, B)$ and the set of equivalence classes of factorizations $A=BX$, $X$ being a subgroup
of $A$ with $B\cap X=\{\textsf{1}_A\}$, of $A$ through $B$. Two factorizations $A=BX$ and $A=BX'$ are said to be \emph{equivalent}
if the subgroups $X$ and $X'$ are conjugate in $A$. Note that when $B$ is normal in $A$, these factorizations are essentially
semidirect products $A=X \!\rtimes B$ (\cite[Chapter 5, Theorem 12]{Dummit}). Thus, Theorem \ref{class.eq.} provides
a complete classification of all possible factorizations of a given group.

The outline of this paper is as follows. After recalling in Section 2 those notions and
aspects of the category of monoid actions and of descent cohomology theory of monads that will be needed,
we develop in Section 3 descent cohomology theory of monoids.

In Section 4 present our main results. We illustrate how to apply the results of the previous section to obtain
a full classification of factorizations of groups in terms of descent cohomology (pointed) sets.

We show in Section 5 that our notion of descent cohomology (pointed) sets includes, as a special case,
Serre's non-abelian  group cohomology. We close the section by giving three examples of calculating of
1-descent cohomology sets.

\section{PRELIMINARIES}

For the basic definitions of category theory we use  \cite{Mc}. The category of sets is  denoted by $\verb"Set"$.

Let $A$ be a monoid. We usually  denote the unit and the multiplication of $A$  as $\textsf{1}_A$ and $m$ respectively.
If we deal with  more than one monoid in context, we add subscripts.  Moreover, if $a$ and $b$ are elements of a monoid
$(A,m_A, \textsf{1}_A)$, we  will often write $ab$ for $m(a,b)$ (at certain  places we write $a \cdot b$ if this helps to avoid confusion).
The map which  sends each element of a monoid $A$ to the unit element $\textsf{1}_B$ of a monoid $B$ is  denoted by $0_{A,B}$.

The fact that $B$ is a (proper) submonoid (resp. subgroup) of a monoid (resp. group) $A$ is denoted by  $B \leqslant A$ (resp. $B < A$).
In this case we write $\imath_B$ for the canonical embedding $B \subseteq A$. If $B$ and $X$ are subsets of a monoid $A$,
we use the notation
\[BX= \{bx : b\in B \,\,\text{and}\,\, x \in X\}.
\]

\subsection{The category of monoid actions.} \label{actions}
Let $A=(A,m,\textsf{1})$ be a monoid. A \emph{left $A$-set} is a pair $(X, \rho_X)$ consisting of a set
$X$ and a map $\rho_X: A \times X \to X $ written as $\rho_X(a,x)=ax$ and is called the \emph{action}
(or the $A$-\emph{action}) on $X$,    if it  satisfies
\[
a(a'x)=(aa')x, \qquad \textsf{1}x=x, \qquad (\forall  a,a' \in A, \forall x \in X).
\]
The monoid $A$ is said to \emph{act on} $X$ (from the left). The set X is called a \emph{(left)} $A$-\emph{set}.
If $X$ and $Y$ are two left $A$-sets, a morphism from $X$ to $Y$ is a map  $f: X \to Y$ such that
$$
f(ax)=af(x),  \qquad \qquad (\forall a \in A,\quad \forall x \in X).
$$
The left $A$-action  and their morphisms
form a category which we denote by ${^{A}\verb"Set"}$.

Alternatively, the category $^A\verb"Set"$ may be considered as the Eilenberg-Moore category of algebras of the monad on
$\verb"Set"$ whose functor-part is the endofunctor
\[
A \times -:\verb"Set" \to \verb"Set", \qquad  X \to A \times X.
\]
Then it follows that the forgetful functor $U: {^A\verb"Set"} \to \verb"Set"$ that takes a left $A$-action $(X, \rho_X)$ to
$X$ has a left adjoint $F: \verb"Set" \to ^A\verb"Set"$ sending a set $X$ to the "free" $A$-set $(A\times X, m \times X)$.

Analogously, one has the category $\verb"Set"^A$ of right $A$-sets. $\verb"Set"^A$ can be equivalently
regarded as the Eilenberg-Moore category of algebras of the monad whose functor-part is the endofunctor
\[
- \times A: \verb"Set" \to \verb"Set", \qquad  X \to A \times X.
\]
If $B$ is another monoid, by an $(A,B)$-biset is meant a set $X$,  which is simultaneously a left $A$-set
and a right $B$-set, such that $a(xb) = (ax)b$. To indicate that $X$ is a
$(A,B)$-biset, we shall sometimes write it as $_AX_B$. Notice that any monoid $A$ may be regarded as both a
left and a right $A$-set, with actions given by the multiplication in $A$.

Let $X \in {^{A}\verb"Set"}$ be a left $A$-set. The \emph{orbit} of an element $x \in X$ is the set
\[
\textsf{Orb}_A(x)=\{ax\mid  a \in A\}\subseteq  X.
\]
Obviously, if   $A$ is a group, then  $\{\textsf{Orb}_A(x)\mid x\in X\}$  form a partition of $X$.
The quotient $X/A$ of the associated equivalence relation on $X$ is called the \emph{quotient} of the action
of $A$ on $X$.

\subsection{ Descent cohomology (pointed) sets. }\label{des.coh.}
We recall the notion of \emph{descent cohomology (pointed) set} from  \cite{Me}.

Given a monad $\T$ on a  category $\A$ and an object $a \in \A$, the set $\mathcal{Z}^{1}(\T, a)$
\emph{of descent 1-cocycles of} $\T$ \emph{with coefficients in} $a$ is defined as the set of all
$\T$-algebra structures on $a$. For any isomorphism $\sigma: a \to b$ in $\A$, one can transport a
$\T$-algebra structure $h: T(a) \to a$ on $a$ to $b$ along the isomorphism $\sigma$. The resultant
$\T$-algebra structure $h^\sigma: \T(b)\to b$ on $b$ is the composite $\T(b)\xr{\T(\sigma)}T(a)\xr{h}a \xr{\sigma^{-1}}b$.
This gives a bijection
\[
\mathcal{Z}^{1}(\T, \sigma): \mathcal{Z}^{1}(\T, a) \to \mathcal{Z}^{1}(\T, a')
\]
of sets. In particular, transporting $\T$-algebra structures along automorphisms of $a$ gives rise to a left  action
$$\ensuremath{\mathsf{Aut}}_{\A}(a) \times \mathcal{Z}^{1}(\T, a) \to \mathcal{Z}^{1}(\T, a)$$
of the group $\ensuremath{\mathsf{Aut}}_{\A}(a)$ on the set $\mathcal{Z}^{1}(\T, a)$.
The 1-\emph{descent cohomology set of} $\T$ \emph{with coefficients in} $a$,
denoted $\D^1(\T, a)$, is the quotient $\mathcal{Z}^{1}(\T, a)/\ensuremath{\mathsf{Aut}}_{\A}(a)$.
It is easy to see that  $\D^1(\T, a)$ is the set of equivalence classes of $\T$-algebra structures on $a$,
where two $\T$-algebra structures are equivalent if they are
isomorphic as objects of the Eilenberg-Moore category of $\T$-algebras, $\A^\T$.

When $a$ already carries a $\T$-algebra structure $h: T(a) \to a$, this structure makes
$\mathcal{Z}^{1}(\T, a)$ a pointed set, with the distinguished point given by $h$.
Accordingly, $\D^1(\T,a)$ is also a pointed set with base point represented by the equivalence class
$[(a,h)]$ of $(a,h)$. We shall indicate this by writing $\mathcal{Z}^{1}(\T, (a,h))$
(resp. $\D^1(\T,[(a, h)])$) rather than  $\mathcal{Z}^{1}(\T, a)$ (resp. $\D^1(\T,a)$).
Moreover, in this special case, the 0-\emph{descent cohomology group} $\D^0(\T, (a, h))$ \emph{of} $\T$ \emph{with
coefficients in} $(a, h)$ can also be defined as the group of all automorphisms of
$(a, h)$ in $\A^{\T}$. Thus $\D^0(\T, (a, h))=  \ensuremath{\mathsf{Aut}}_{\A^\T}(a, h)$.

Obviously  for each isomorphism $\sigma: a \to b$, the bijection $\mathcal{Z}^{1}(\T, \sigma)$
yields a bijection $$\D^1(\T,\sigma): \D^1(\T,a)\to \D^1(\T,b).$$ Moreover, if $(a, h) \in \A^{\T}$,
then $\mathcal{Z}^{1}(\T, \sigma): \mathcal{Z}^{1}(\T, (a,h)) \to \mathcal{Z}^{1}(\T, (b,h^\sigma))$
(resp. $\D^1(\T,\sigma): \D^1(\T, (a, h)) \to \D^1(\T, (b, h^\sigma))$) becomes a bijection of sets
(resp. an isomorphism of pointed sets).

\section { DESCENT COHOMOLOGY THEORY FOR MONOIDS}\label{mon.}

Let $A=(A,m, \textsf{1})$ be a monoid. For any right $A$-set $(X,\varrho_X)$ and a left $A$-set $(Y, \rho_Y)$,
one defines their \emph{tensor product} as the coequalizer in $\verb"Set"$,
\[{\xymatrix @R=.5in @C=.2in { X \!\times \! A \times \! Y \ar@{->}@<0.5ex>[rr]^-{\varrho_X \times Y} \ar@
{->}@<-0.5ex> [rr]_-{X \times \rho_Y}&& X \! \times \! Y \ar[r]^-{\pi}& X \! \otimes_A \!Y.}}\]
According to the coequalizer construction in $\verb"Set"$, $X \! \otimes_A \!Y$ is the quotient $(X \! \times \!Y)/\theta$,
where $\theta$ is the equivalence relation on $X \! \times \!Y$ generated by
$$
((xa,y),(x,ay)),\qquad\qquad   (x \in X, y \in Y, a \in A)
$$ and $\pi(x,y)$, which is denoted by $x\otimes_A y$, is the
$\theta$-equivalence class $[(x,y)]_\theta$ of $(x,y)$.

If $X$ is a $(B,A)$-biset  and  $Y$ is an $(A,C)$-biset, then $X \! \otimes_A \!Y$ has the structure of a   $(B,C)$-biset.
Left and right actions by $B$ and $C$ are defined by the following rules
\[
b(x,y)=(bx,y)\qquad\qquad\text{and}\qquad\qquad  (x,y)c=(x,yc).
\]
For any homomorphism of monoids $\iota: B \to A$, the natural $A$-biset structure on $A$
gives rise to a $(B, B)$-biset structure on $A$ via $\iota$. $A$ then receives the natural $(A, B)$-
biset structure and thus for any left $B$-set $X$, $A \otimes_B X$ is a left $A$-set via the action
defined by $a(a' \otimes_A x)=aa' \otimes_A x$ for all $a,a' \in A$ and $x \in X$. This extends
naturally to a functor
\[
\iota_*=A\!\otimes_{B} -: {^{B}\verb"Set"} \to {^{A}\verb"Set"},\qquad X\longmapsto A\!\otimes_{B}\! X,
\]
which one calls the \emph{extension-of-scalars functor}. Moreover, any  left $A$-set $(Y,\varrho)$ can be
turned into a left $B$-set with a left action defined by the composition
\[
\widehat{\varrho}: B \times Y \xr{\iota \times Y} A\times Y \xr{\varrho}Y,
\]
and one obtains in this way a functor
\[\iota^*: {^{A}\verb"Set"} \to {^{B}\verb"Set"}, \]
called \emph{restriction-of-scalars}. It is well-known that the restriction-of-scalars functor is right adjoint
to the extension-of-scalars functor. Write $\T^l\!\!_\iota=(T_\iota, \mu_\iota, \eta_\iota)$ for the monad on
${^{B}\verb"Set"}$ generated by this adjunction.
Then
\begin{itemize}
  \item $T^l\!\!_\iota(X, \rho_X)=(A \!\otimes_{B} \!\!X, B\!\times \!(A \!\otimes_{B} \!X) \xr{(b,a \otimes_{B}x)
  \mapsto (\iota(b)a \otimes_{B}x)}  A\!\otimes_{B}\!X) $;
  \item $(m_\iota)_{(X, \rho_X)}: A\!\otimes_{B} \!A \!\otimes_{B} \!X \xr{\bar{m}\otimes_{B}X}A\!\otimes_{B}\!X$;
  \item $(\eta_\iota)_{(X, \rho_X)}: X \xr{x \mapsto \textsf{1}\otimes_{B} \,x}A\!\otimes_{B}\!\!X$,
\end{itemize} for all left $B$-set $(X, \rho_X)$. Here $\bar{m}: A\!\otimes_{B} \!A \to A$ is the unique morphism
making the diagram
\begin{equation} \label{m}\xymatrix @R=.3in @C=.4in{
A \times A \ar[r]^-{\pi} \ar[rd]_{m}&A\!\otimes_{B} \!A \ar[d]^{\bar{m}}\\
&A}
\end{equation}
commutative. In other words, ${\bar{m}(a\otimes_Ba')=aa'}$.

Symmetrically, viewing $A$ as an $(X, A$)-biset via $\iota$, one has the functor
\[
\iota_\star=-\otimes_{B}A: {\verb"Set"}^B \to {\verb"Set"}^A,
\]
its left adjoint $$\iota^\star: {\verb"Set"}^A \to {\verb"Set"}^B$$
and the corresponding monad $\T^r\!\!\!_\iota$ on ${\verb"Set"}^{B}$.

Since $(B, m_B)\in {^{B}\verb"Set"}$, we can consider  $\T^l\!\!_\iota$-algebra structures on $(B, m_B)$.
The proof of the following theorem is straightforward and is omitted.

\begin{theorem} \label{T-str}
Let  $\iota: B \to A$ be a  morphism  of monoids.  The assignment
\[
 (A\!\otimes_{B} \!B  \xr{h}B) \longmapsto (A \xr[\simeq] {a \to  a\otimes_{B}\textsf{1}_B} A\!\otimes_{B} \!B \xr{h} B)
\]
yields a one-to-one bijection between
$\mathcal{Z}^{1}(\emph{\T}^l\!\!_\iota,  (B,m_B))$ and the set of maps $q: A \to B$ such that
\begin{enumerate}
  \item [(ZL1)]  $q(\emph{\textsf{1}}_{A})=\emph{\textsf{1}}_B$;
  \item [(ZL2)] $q(\iota(b) a)=b  q(a)$;
  \item [(ZL3)] $q(aa')=q(a\cdot \iota q(a'))$,\quad \qquad  ($\forall a, a' \in A$, $b\in B$).
\end{enumerate} Under this bijection, $q: A \to B$ corresponds to the composite
\[
A\!\otimes_{B} \!B \xr[\simeq]{a\otimes_{B}b \to  a\cdot \iota(b)} A \xr{q}B.
\]
\end{theorem}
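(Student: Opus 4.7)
The plan is to transport a putative $\T^l\!\!_\iota$-algebra structure $h\colon A\otimes_{B}B\to B$ on $(B,m_B)$ through the canonical unit isomorphism $\phi\colon A\otimes_{B}B\xr{\simeq}A$, $a\otimes_{B}b\mapsto a\cdot\iota(b)$, whose inverse is $a\mapsto a\otimes_{B}\textsf{1}_B$ (both maps are $B$-equivariant, and the inverse descends to the tensor product because $\iota$ is a monoid morphism). The bijection of the theorem is then $h\leftrightarrow q:=h\circ\phi^{-1}$, i.e.\ $q(a)=h(a\otimes_{B}\textsf{1}_B)$ in one direction and $h(a\otimes_{B}b)=q(a\cdot\iota(b))$ in the other. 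What remains is to identify the three axioms defining a $\T^l\!\!_\iota$-algebra structure on $(B,m_B)$, namely that $h$ is a morphism in ${^{B}\verb"Set"}$ and that $h$ satisfies the unit and associativity laws against $\eta_\iota$ and $\mu_\iota$, with the three conditions (ZL1)--(ZL3).

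I would then handle each axiom in turn by chasing an elementary tensor. The $B$-equivariance $h(\iota(b)a\otimes_{B}b')=b\cdot h(a\otimes_{B}b')$, evaluated at $b'=\textsf{1}_B$, collapses to $q(\iota(b)a)=b\cdot q(a)$, which is (ZL2); the general case follows from (ZL2) applied with $a$ replaced by $a\iota(b')$. The unit law $h\circ(\eta_\iota)_{(B,m_B)}=\mathrm{id}_B$ becomes $q(\iota(b))=b$ for every $b\in B$ (using $\textsf{1}_A\otimes_{B}b=\iota(b)\otimes_{B}\textsf{1}_B$); its $b=\textsf{1}_B$ instance is exactly (ZL1), and conversely (ZL1) combined with (ZL2) at $a=\textsf{1}_A$ recovers $q(\iota(b))=b\cdot q(\textsf{1}_A)=b$. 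Finally, applying $h\circ(m_\iota)_{(B,m_B)}$ and $h\circ\T^l\!\!_\iota(h)$ to the generic element $a\otimes_{B}a'\otimes_{B}b$ yields $q(aa'\iota(b))$ and $q(a\cdot\iota(q(a'\iota(b))))$ respectively; setting $b=\textsf{1}_B$ gives (ZL3), and (ZL3) with $a'$ replaced by $a'\iota(b)$ recovers the identity for arbitrary $b$.

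The only real bookkeeping obstacle is the systematic use of the defining tensor relation $a\iota(b)\otimes_{B}b'=a\otimes_{B}bb'$: it is this identity that lets each axiom, a priori parametrized by the auxiliary slot $b'\in B$, be tested at $b'=\textsf{1}_B$, producing the clean three-condition description (ZL1)--(ZL3). Once this reduction is in hand the translation is purely mechanical, and the final assertion of the theorem, that $q$ corresponds to the composite $A\otimes_{B}B\xr{\simeq}A\xr{q}B$, is simply the equality $h=q\circ\phi$ restated in elementwise form.
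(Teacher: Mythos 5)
Your argument is correct and is exactly the ``straightforward'' verification the authors omit: transport $h$ along the canonical isomorphism $A\otimes_B B\simeq A$ and match $B$-equivariance, the unit law, and the associativity law with (ZL2), (ZL1), and (ZL3) respectively, using the relation $a\iota(b)\otimes_B b'=a\otimes_B bb'$ to reduce each axiom to its $b'=\textsf{1}_B$ instance. The paper gives no proof to compare against, so nothing further is needed.
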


\begin{thm}{\bf Remark.} \label{T-str.r}\em
It is easy to see that under (ZL2), (ZL1) is equivalent to the condition  $q \iota =\textsf {Id}_B$.
\end{thm}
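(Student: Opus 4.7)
The plan is to verify both implications by a single substitution in each direction, exploiting only (ZL2) and the fact that $\iota$ is a monoid homomorphism, so $\iota(\textsf{1}_B) = \textsf{1}_A$.

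For the implication (ZL1) $\Rightarrow$ $q\iota = \textsf{Id}_B$, I would assume (ZL2) and specialize it to $a = \textsf{1}_A$, obtaining
\[
q(\iota(b) \cdot \textsf{1}_A) \;=\; b \cdot q(\textsf{1}_A)
\]
for every $b \in B$. The left-hand side collapses to $q(\iota(b))$ by the unit axiom in $A$, and by (ZL1) the right-hand side becomes $b \cdot \textsf{1}_B = b$. This yields $q(\iota(b)) = b$ for all $b \in B$, i.e.\ $q\iota = \textsf{Id}_B$.

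For the converse, I would start from $q\iota = \textsf{Id}_B$ and evaluate at the distinguished element $b = \textsf{1}_B \in B$. This gives $q(\iota(\textsf{1}_B)) = \textsf{1}_B$. Since $\iota$ preserves the unit, $\iota(\textsf{1}_B) = \textsf{1}_A$, and hence $q(\textsf{1}_A) = \textsf{1}_B$, which is (ZL1). Notably, this direction uses nothing about (ZL2); it relies only on $\iota$ being a morphism of monoids.

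There is no genuine obstacle here: each direction reduces to a one-line substitution. The only subtlety worth flagging is the asymmetry between the two implications — the forward direction really does use (ZL2), whereas the converse does not — which is why the remark phrases the equivalence as one that holds \emph{under} the hypothesis (ZL2) rather than unconditionally.
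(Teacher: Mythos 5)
Your proof is correct and is precisely the one-line substitution argument the paper leaves implicit when it calls the remark ``easy to see'': setting $a=\textsf{1}_A$ in (ZL2) and using (ZL1) gives $q\iota=\textsf{Id}_B$, while the converse follows from $\iota(\textsf{1}_B)=\textsf{1}_A$ alone. Your observation that the converse does not actually need (ZL2) is accurate and consistent with the remark's phrasing.
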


Symmetrically  we have the following.

\begin{theorem} \label{T-str.1.d.}Given a morphism $\iota: B \to A$ of monoids, there is a one-to-one bijection between
$\mathcal{Z}^{1}(\emph{\T}^r\!\!_\iota,  (B,m_B))$ and the set of maps $p: A \to B$ such that
\begin{enumerate}
  \item [(ZR1)] $p(\emph{\textsf{1}}_{A})=\emph{\textsf{1}}_B$;
  \item [(ZR2)] $p(a \iota(b))= p(a)b$;
  \item [(ZR3)] $p(aa')=p(\iota p(a) a')$, \qquad ($\forall a, a' \in A, b\in B$).
\end{enumerate}
\end{theorem}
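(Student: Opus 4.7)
The plan is to deduce the statement from Theorem \ref{T-str} by a duality argument, so that no computation beyond a careful translation of formulas is required. Given $\iota: B \to A$, form the opposite monoids $B^{op}$ and $A^{op}$ together with the induced homomorphism $\iota^{op}: B^{op} \to A^{op}$. The standard identification of right $A$-sets with left $A^{op}$-sets produces an isomorphism of categories $\verb"Set"^A \cong {}^{A^{op}}\verb"Set"$ (and similarly for $B$), and under these identifications the extension/restriction-of-scalars adjunction along $\iota^{op}$ corresponds to the one along $\iota$ but with sides swapped. Consequently, the monad $\T^r\!\!_\iota$ on $\verb"Set"^B$ is carried to the monad $\T^l\!\!_{\iota^{op}}$ on ${}^{B^{op}}\verb"Set"$, and the object $(B,m_B)$ of $\verb"Set"^B$ becomes the object $(B^{op}, m_{B^{op}})$ of ${}^{B^{op}}\verb"Set"$. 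Since the notion of a descent $1$-cocycle depends only on the monad and the object, this yields a canonical bijection $\mathcal{Z}^1(\T^r\!\!_\iota, (B,m_B)) \cong \mathcal{Z}^1(\T^l\!\!_{\iota^{op}}, (B^{op}, m_{B^{op}}))$.

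Next, I would apply Theorem \ref{T-str} to $\iota^{op}$ to identify the latter cocycle set with the set of maps $q: A^{op} \to B^{op}$ satisfying (ZL1)--(ZL3). Reading such a $q$ as an ordinary set map $p: A \to B$, and rewriting each axiom by replacing the opposite multiplication $a \cdot_{op} a' = a'a$ by ordinary multiplication in $A$ (and similarly in $B$), the axioms turn, respectively, into (ZR1), (ZR2) and (ZR3). Composing the two bijections produces the desired correspondence. Under this composition, the explicit form of $p$ coming from a $\T^r\!\!_\iota$-algebra structure $h:B\otimes_B A\to B$ is given by the composite
\[
A\xr[\simeq]{a \,\mapsto\, \textsf{1}_B\otimes_B a} B\otimes_B A \xr{h} B,
\]
which is the evident mirror of the formula in Theorem \ref{T-str}.

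The only real obstacle is bookkeeping: one must check that the identification of right $A$-sets with left $A^{op}$-sets carries the extension functor $-\otimes_B A$ to the functor $A^{op} \otimes_{B^{op}} -$ at the level of underlying sets, and that this identification respects the unit and multiplication of the monads. Once this is verified (a routine diagram chase using that $\bar{m}(a\otimes_B a')=aa'$ is transported to $\bar{m}^{op}(a\otimes_{B^{op}} a')=a'a$), the correspondence between the axioms (ZL$i$) and (ZR$i$) becomes a mechanical substitution, and the theorem follows.
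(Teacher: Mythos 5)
Your proposal is correct, and it is essentially the paper's own argument: the paper proves nothing here beyond the word ``Symmetrically,'' and your opposite-monoid duality (identifying $\verb"Set"^B$ with $^{B^{\mathrm{op}}}\verb"Set"$, transporting $\T^r\!\!_\iota$ to $\T^l\!\!_{\iota^{\mathrm{op}}}$, and translating (ZL1)--(ZL3) into (ZR1)--(ZR3)) is exactly the intended justification, spelled out. The translation of the axioms and the mirrored formula for $p$ both check out.
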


\begin{thm}{\bf Remark.} \label{remark.str}\em
\begin{itemize}
  \item [(a)] It can be easily  verified that if $r: A \to B$ is a monoid homomorphism with $r \iota =\textsf{Id}_B$,
  then $r \in \mathcal{Z}^{1}(\T^l\!\!_\iota,  (B,m_B))$ and $r \in \mathcal{Z}^{r}(\T^l\!\!_\iota,  (B,m_B))$.
\item [(b)]  Since (see Theorem \ref{T-str})
\[
\begin{split}
q(a)=q(\textsf{1}_A a)\stackrel{(ZL3)}=&
  q(\textsf{1}_A \cdot \iota q(a))\\
  =&q(\iota q(a)\cdot \textsf{1}_A)\stackrel{(ZL2)}=q(a) q(\textsf{1}_A),\qquad (a \in A)
\end{split}
\]
under Conditions (ZL2) and (ZL3) the equality $q(\textsf{1}_{A})=\textsf{1}_B$ holds, provided
  that there is an element $a \in A$ whose image under $q$ is invertible. In particular, this  is always the case when  $B$ is a group.
  \end{itemize}

\bigskip

\noindent Combining Remark \ref{remark.str}(b) and its dual with Proposition \ref{T-str} gives

\begin{proposition} \label{T-str.00}
Let $\iota: B \to A$ be a morphism of monoids, where $B$ is a group. Then elements of the set
$\mathcal{Z}^{1}(\emph{\T}^l\!\!_\iota,  (B,m_B))$ (resp. $\mathcal{Z}^{1}(\emph{\T}^r\!\!_\iota,  (B,m_B))$) are those maps
$A \to B$ which satisfy Conditions (\emph{ZL2}) and (\emph{ZL3}) (resp. (\emph{ZR2}) and (\emph{ZR3})).
\end{proposition}

\begin{proposition} \label{subgr.} Let $\iota: B \to A$ be a morphism of monoids, where $A$ is a group. If
either $\mathcal{Z}^{1}(\emph{\T}^l\!\!_\iota,  (B,m_B))$ or  $\mathcal{Z}^{r}(\emph{\T}^l\!\!_\iota,  (B,m_B))$ is non-empty, then $B$ is also a group.
\end{proposition}
\begin{proof} If $\mathcal{Z}^{1}(\T^l\!\!_\iota,  (B,m_B))\neq\varnothing$ and $q \in \mathcal{Z}^{1}(\T^l\!\!_\iota,  (B,m_B))$,
then $\iota$ is injective by Remark \ref{T-str.r}. Therefore, we can suppose that $B$ is a submonoid of $A$ and  $\iota$ is the canonical inclusion.
To show that any element $b\in B$ is invertible in $B$, it is enough to show that the inverse $b^{-1}$  of $b$ in $A$ lies in $B$.
But since $\textsf{1}_A=q(\textsf{1}_A)$ by (ZL1) and since $bb^{-1}=\textsf{1}_A$, it follows that
\[
\textsf{1}_A=q(\textsf{1}_A)=q(bb^{-1})\stackrel{(ZL2)}=bq(b^{-1}).
\]
Consequently, $q(b^{-1})=b^{-1}$ in $A$ and hence $b^{-1}\in B$.
The  $\mathcal{Z}^{1}(\T^r\!\!_\iota,  (B,m_B))\neq\varnothing$ case can be treated  similarly.
\end{proof}

\begin{proposition}\label{iso-T-str}
If the conditions  of Theorem \ref{T-str} holds, then two $\emph{\T}^l\!\!_\iota$-algebra structures
$q, \, q\,': A \to B$ on $(B, m_B)$ are isomorphic if and only if there is an invertible element $b_0 \in B$ such that
$q(a) b_0=q\,'(a\cdot  \iota(b_0))$ for all $a \in A$.
\end{proposition}
\begin{proof} Two $\T^l\!\!_\iota$-algebra structures $q, q\,': A \to B$ on $(B, m_B)$
are isomorphic if and only if there is an isomorphism $f: B \to B$ of left $B$-sets making the diagram
\[
\xymatrix @R=.4in @C=.6in{
A\!\otimes_{B} \!B \ar[r]^{r\!_A}\ar[d]|{A\otimes_{B} \!f}& A\ar[d]|{r\!_A \cdot (A\otimes_{B} \!f)
\cdot r\!_A^{-1}} \ar[r]^-{q}& B\ar[d]^f\\
A\!\otimes_{B} \!B \ar[r]_{r\!_A}&A \ar[r]_-{q'}& B,}
\] where $r_A$ is the isomorphism $A\!\otimes_{B} \!B \xr{a\otimes_{B}b \to  a\cdot \iota(b)} A$,
commute. Since $f$ is a morphism of left $B$-set, there exists an invertible element $b_0 \in B$
(namely, $f(\textsf{1}_B)$\footnote{To see that the element $f(\textsf{1}_B)$  is indeed invertible
in $B$, we note first that since $f$ is a morphism of left $B$-sets, $f(b)=bf(\textsf{1}_B)$ for all
$b\in B$.  It follows -- since $f$ is bijective -- that $f(b_0)=b_0f(\textsf{1}_B)=\textsf{1}_B$ for
some $b_0 \in B$. Next, since $f (f(\textsf{1}_B)b_0)=f(\textsf{1}_B)f(b_0)=f(\textsf{1}_B)$ and
since $f$ is bijective, one concludes that $f(\textsf{1}_B)b_0=\textsf{1}_B$, proving that
$f(\textsf{1}_B)$  is invertible with inverse $b_0$.}) such that $f(b)=b b_0$ for all $b \in B$.
Therefore, the last diagram commutes if and only if
\begin{equation}\label{equivalent}q(a)b_0=q\,'(a  \cdot\iota(b_0)), \qquad\quad  (\forall a \in A).
\end{equation}
\end{proof}
Particular examples of maps satisfying Conditions (ZL1)--(ZL3) arise naturally from Schreier split
extensions of monoids as exhibited in the following

\begin{proposition}\label{schreier} Let $(A , \,B ,\, p , \,j)$ be a Schreier split epimorphism of
monoids in the sense of \cite[Definition 2.1.2.]{Bourn} and let
\[\xymatrix @R=.4in @C=.6in{\emph{\textsf{Ker}}(p)  \ar[r]_\kappa  &A \ar@/_1.6pc/@{-->} [l]_{q}\ar@{->}@<+0.8ex>[r]^-{p}
 &  B\ar@<+0.6ex>[l]^{j}}
\]
be the canonical Schreier split sequence associated with it. Then $q: A \to \emph{\textsf{Ker}}(p)$ defines a
$\emph{\T}^l\!\!\!_\kappa$-algebra structure on $\emph{\textsf{Ker}}(p)$ (or, equivalently, $q \in \mathcal{Z}^{1}(\emph{\T}^l\!\!\!_\kappa,
(\emph{\textsf{Ker}}(p),m_{\emph{\textsf{Ker}}(p)}))$.
\end{proposition}
\begin{proof} According to Proposition \ref{T-str} and Remark \ref{T-str.r}, $q$ defines a $\T^l\!\!\!_\kappa$-algebra
structure on $\textsf{Ker}(p)$ provided that
\begin{enumerate}
  \item $q  \kappa=\textsf{Id}_{\textsf{Ker}(p)}$;
  \item $q$ is a morphism of left $\textsf{Ker}(p)$-sets;
  \item $q(a \cdot a')=q(a \cdot \kappa q(a'))$.
\end{enumerate}

Since (1) is just \cite[Proposition 2.1.5.(a)]{Bourn}, it remains to prove (2) and (3).

\bigskip

\noindent   To show (2), note first that for each $x \in \textsf{Ker}(p)$ and  $a \in A$, we have
$$jp(\kappa(x) a)=jp(\kappa(x))\cdot jp(a)=j(\textsf{1}_B) \cdot jp(a)=\textsf{1}_A \cdot jp(a)=jp(a).$$ In the first equality, we use the fact that both
$p$ and $j$ are morphisms of monoids. The second equality holds because $x \in \textsf{Ker}(p)$ and $(\textsf{Ker}(p), \kappa)$ is the kernel of $p$. Thus,
\begin{equation}\label{kernel}
jp(\kappa(x) a)=jp(a),  \qquad\qquad (\forall   x \in K[p],  a \in A).
\end{equation} According to \cite[Proposition 2.1.4.]{Bourn}, we have
 \begin{equation}\label{kernel.1} \kappa(x)a=q(\kappa(x) a)\cdot jp(\kappa(x)a)\end{equation}
and
\begin{equation}\label{kernel.2} a=\kappa q(a)\cdot jp(a).\end{equation}
But
$$\kappa(x) a \stackrel{(\ref{kernel.2})}=\kappa(x) \cdot \kappa q(a)\cdot jp(a)\stackrel{(\ref{kernel})}=\kappa(x)\cdot q(a)\cdot jp(x\cdot a)$$
and comparing the last equality and (\ref{kernel.1}) yields  by the uniqueness of $q$ that
\begin{equation}\label{kernel.3} q(\kappa(x)a)=x\, q(a).
\end{equation}
Hence $q$ is a morphism of left $\textsf{Ker}(p)$-sets.

As for (3), since $q(a a') = q(a) \cdot q(jp(a) \cdot \kappa q(a'))$ by \cite[Proposition 2.1.5.(e)]{Bourn},
\[
\begin{split}
q(a a') &= q(a) \cdot q(jp(a) \cdot \kappa q(a'))
\stackrel{(\ref{kernel.3})}= \\
&=q(\kappa q(a) \cdot jp(a)\cdot \kappa q(a'))  \stackrel{(\ref{kernel.2})}=q(a \cdot \kappa q(a')).
\end{split}
\]
This completes the proof.
\end{proof}

The \emph{center} $Z(A)$ of a  monoid $A$ is the set of those elements of $A$ which commute with all elements of $A$;
clearly, $\textsf{1}_A \in Z(A)$. A \emph{central submonoid }of a monoid $A$ is a submonoid $X$ of $A$ such that
$X \subseteq Z(A)$. A morphism $\iota: B \to A$ of monoids is called \emph{central} if $B$ is commutative  and its image
$\iota(B)\subseteq Z(A)$,  that is $$a \in A, b \in B \Rightarrow \iota(b)a=a\iota(b).$$

\begin{proposition}\label{central}  Let  $\iota: B \to A$ be a central homomorphism of monoids. Then
$$\mathcal{Z}^{1}(\emph{\T}^l\!\!_\iota,  (B,m_B))=\mathcal{H}^{1}(\emph{\T}^l\!\!_\iota,  (B,m_B)).$$ Moreover,
a map $q: A \to B$ lies in $\mathcal{Z}^{1}(\emph{\T}^l\!\!_\iota,  (B,m_B))$ if and only
it is a retraction of $\iota$ (i.e., a monoid homomorphism with $q \iota =\emph{\textsf{Id}}_B$).
\end{proposition}
\begin{proof} We first prove that $q \in \mathcal{Z}^{1}(\T^l\!\!_\iota,  (B,m_B))$ (i.e. $q: A \to B$ defines a
$\T^l\!\!_\iota$-algebra structure on $B$) if and only if $q$ is a homomorphism of monoids with $q \iota =\textsf{Id}_B$.
Since  the forward implication follows from Remark \ref{remark.str}(a), suppose that $q \in \mathcal{Z}^{1}(\emph{\T}^l\!\!_\iota,  (B,m_B))$.
Then  we have  that
\begin{align*}
q(aa')&=q(a\cdot \iota q(a'))& \text{by }\quad (\emph{ZL3})\\
&=q(\iota q(a')a)& \text{by centrality of}\quad \iota\\
&=q(a')q(a)& \text{by}\quad  (\emph{ZL2})\\
&=q(a)q(a'), & \text{by commutativity of}\quad B
\end{align*}
for all $a,a' \in A$. Thus, $q(aa')=q(a)q(a')$ and since $q(\textsf{1}_A)=\textsf{1}_B$ by (ZL1), it follows that $q$ is a homomorphism of monoids.

Let $q, q\,': A \to B$ be two $\T^l\!\!_\iota$-algebra structures  on $(B, m_B)$.
Classes $[q]$ and $[q']$ coincide in $\mathcal{H}^{1}(\T^l\!\!_\iota,  (B,m_B))$ (see  Proposition \ref{iso-T-str})
if and only if there exists  an invertible element $b_0 \in B$ such that $q(a) b_0=q\,'(a\cdot  \iota(b_0))$ for all $a \in A$.
As we have just seen, $q'$ is a homomorphism of monoids, so
\[
q\,'(a\cdot  \iota(b_0))=q'(a)q'(\iota(b_0))=q'(a) b_0.
\]
 Hence
$q(a) b_0=q'(a) b_0$ for all $a \in A$ and $q=q'$, because  $b_0$ is invertible. This  proves  the equality of sets
$\mathcal{Z}^{1}(\T^l\!\!_\iota,  (B,m_B))$ and $\mathcal{H}^{1}(\T^l\!\!_\iota,  (B,m_B))$.
\end{proof}

\end{thm}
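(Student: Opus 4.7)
The plan is to translate the abstract isomorphism condition in the Eilenberg--Moore category $({^{B}\verb"Set"})^{\T^l\!\!_\iota}$ into the explicit relation stated. By definition, two $\T^l\!\!_\iota$-algebra structures $q, q' : A \to B$ on $(B,m_B)$ are isomorphic exactly when there is an isomorphism $f : B \to B$ of left $B$-sets that is simultaneously a morphism of $\T^l\!\!_\iota$-algebras from $(B, h_q)$ to $(B, h_{q'})$, where by Theorem \ref{T-str} the structure map $h_q$ is the composite $A\otimes_B B \xr[\simeq]{r_A} A \xr{q} B$ (and likewise for $h_{q'}$). The argument then splits into classifying the left $B$-set isomorphisms and then rewriting the algebra-compatibility condition.

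The first step is to describe isomorphisms $f : B \to B$ in $^{B}\verb"Set"$. Since $f$ is $B$-equivariant, $f(b) = b\cdot f(\textsf{1}_B)$ for all $b$, so $f$ is determined by $b_0 := f(\textsf{1}_B)$ and acts as right multiplication by $b_0$. For $f$ to be an isomorphism, surjectivity furnishes some $b_0' \in B$ with $f(b_0') = b_0' b_0 = \textsf{1}_B$; then $f(b_0 b_0') = (b_0 b_0') b_0 = b_0 (b_0' b_0) = b_0 = f(\textsf{1}_B)$, and injectivity of $f$ forces $b_0 b_0' = \textsf{1}_B$. Hence $b_0$ is invertible in $B$, and conversely every invertible $b_0 \in B$ yields a left-$B$-set isomorphism $b \mapsto bb_0$.

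The second step is to unpack the algebra-morphism condition $f \circ h_q = h_{q'} \circ T^l\!\!_\iota(f)$. Using $r_A(a\otimes_B b) = a\cdot\iota(b)$ and $f(b) = b b_0$, the two composites evaluated on $a\otimes_B b$ become
\[
f\bigl(q(a\cdot\iota(b))\bigr) = q(a\cdot\iota(b))\,b_0 \qquad \text{and} \qquad q'\bigl(a\cdot\iota(f(b))\bigr) = q'(a\cdot\iota(b)\cdot\iota(b_0)),
\]
so commutativity of the outer rectangle amounts to
\[
q(a\cdot\iota(b))\,b_0 = q'(a\cdot\iota(b)\cdot\iota(b_0)) \qquad (a\in A,\ b\in B).
\]
Specializing to $b = \textsf{1}_B$ produces the claimed identity $q(a)\,b_0 = q'(a\cdot\iota(b_0))$.

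For the converse, if this identity holds for some invertible $b_0\in B$, substituting $a\cdot\iota(b)$ for $a$ recovers the full two-variable condition, so $f(b) := bb_0$ is a $\T^l\!\!_\iota$-algebra isomorphism witnessing $q \cong q'$. The one delicate point I anticipate is verifying that the element $b_0 = f(\textsf{1}_B)$ is \emph{two-sided} invertible in $B$ (and not merely cancellable or one-sided invertible in some larger monoid); this is handled by the short cancellation argument above using injectivity of $f$, after which the rest of the proof is a mechanical unwinding of the commutative rectangle.
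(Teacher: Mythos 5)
Your proposal is correct and follows essentially the same route as the paper: both identify the left $B$-set automorphisms of $(B,m_B)$ with right multiplication by an invertible $b_0=f(\textsf{1}_B)$ (via the same cancellation argument using bijectivity of $f$), and then unwind the commutativity of the Eilenberg--Moore morphism square through the isomorphism $r_A: A\otimes_B B \to A$ to arrive at $q(a)b_0=q'(a\cdot\iota(b_0))$. Your explicit check that the two-variable condition on $a\otimes_B b$ is equivalent to its specialization at $b=\textsf{1}_B$ is a small verification the paper leaves implicit, but it is the same proof.
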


\section{THE CASE OF GROUPS}\label{grp.}

Let $B$ be a subgroup of a group $A$. Recall that  a subgroup $X$ of $A$ is called a
\emph{complement} to the subgroup $B$ in $A$ if $A=BX$ and $B\cap X=\{\textsf{1}_A\}$.
Recall also that if  $B$ is a subgroup of a group $A$  and $X$ is a complement to $B$ in $A$,
then each element $a \in A$ is uniquely expressible in the form
\[
a=b_ax_a,  \qquad (b_a \in B, x_a \in A).
\]
Recall further that a group $A$ is said to be \emph{factorizable} if $A = BX$
for some proper subgroups $B$ and $X$ of $A$. The expression $A = BX$ is called a \emph{factorization} of $A$.

\begin{lemma}\label{ccj}
Let $X$ be  a complement to $B$ in $A$. Then each  $A$-conjugate of $X$ is also a complement for $B$ in $A$.
\end{lemma}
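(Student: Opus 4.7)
The plan is to reduce conjugation by an arbitrary element of $A$ to conjugation by an element of $B$. First I would exploit the assumed factorization $A = BX$ to write any given $a \in A$ as $a = b_0 x_0$ with $b_0 \in B$ and $x_0 \in X$. Since $x_0$ lies in the subgroup $X$, conjugation of $X$ by $x_0$ is trivial: $x_0 X x_0^{-1} = X$. This immediately gives
\[
a X a^{-1} = b_0 x_0 X x_0^{-1} b_0^{-1} = b_0 X b_0^{-1},
\]
so it suffices to prove the lemma for $B$-conjugates of $X$.

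Next, to show that $b_0 X b_0^{-1}$ is a complement to $B$, I would verify the two defining conditions directly. For the intersection, suppose some element of $B$ has the form $b_0 x b_0^{-1}$ with $x \in X$; then $x = b_0^{-1}(b_0 x b_0^{-1}) b_0$ also lies in $B$ (because $B$ is a subgroup and $b_0 \in B$), so $x \in B \cap X = \{\textsf{1}_A\}$, and hence $b_0 x b_0^{-1} = \textsf{1}_A$. For the product, note that $Bb_0 = B$, since right translation by $b_0 \in B$ permutes $B$; therefore
\[
B \cdot (b_0 X b_0^{-1}) = (Bb_0)\, X\, b_0^{-1} = BX\, b_0^{-1} = A\, b_0^{-1} = A.
\]
Combining the two conclusions, $b_0 X b_0^{-1}$ (and hence $aXa^{-1}$) is a complement to $B$ in $A$.

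There is no serious technical obstacle here; the argument is essentially bookkeeping. The only subtle point, and the one easy to overlook, is that the statement looks suspicious because $B$ need not be normal in $A$: one might na\"{\i}vely expect that conjugating $X$ by an arbitrary $a \in A$ would produce a complement to $aBa^{-1}$ rather than to $B$. The whole content of the lemma is that this apparent asymmetry is compensated by the factorization $A = BX$ itself, which lets one absorb the $X$-part of $a$ into $X$ and reduce the problem to conjugation by the $B$-part $b_0$, under which both $B$ and the complement-defining properties are preserved.
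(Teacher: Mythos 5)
Your proof is correct and follows essentially the same route as the paper's: decompose $a=b_0x_0$ via $A=BX$ to reduce $A$-conjugation to $B$-conjugation, then check the trivial-intersection and product conditions for $b_0Xb_0^{-1}$ directly. No issues.
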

\begin{proof}Let $a \in A$ and let $X$ be a complement to $B$ in $A$. Since $X$ is a complement to $B$ in $A$,
there exist  $b_a \in B$ and $x_a\in X$ such that $a=b_ax_a$, so
\[
aXa^{-1}=b_ax_a X a_a^{-1}b_a^{-1}=b_aXb_a^{-1}.
\]
Thus, $A$-conjugates for $X$ in
$A$ are in fact $B$-conjugates. Now, if $b=b_axb_a^{-1}$ for some $x \in X$, then $x=b_a^{-1}b b_a$
and hence $x \in B$. But since $B\cap X=\{\textsf{1}_A\}$, it follows that $x=\textsf{1}_A$, and hence $b=\textsf{1}_A$.
This yields  $B\cap b_aXb_a^{-1}=\{\textsf{1}_A\}$. Moreover, since $BX=A$, we have
$$
Bb_aXb_a^{-1}=BXb_a^{-1}=Ab_a^{-1}=A,
$$
so  $aXa^{-1}=b_aXb_a^{-1}$ is a complement to $B$ in $A$.
\end{proof}

We henceforth suppose that $\iota: B \to A$ is  a morphism of groups and
write $\mathcal{C}_\iota$ for the set of the complements to $\iota(B)$   in $A$.

\begin{proposition}\label{T-compl.0} For each  $q \in \mathcal{Z}^{1}(\emph{\T}^l\!\!_\iota,  (B,m_B))$, the set
\[
\emph{\textsf{Ker}} (q)=\{a\in A: q(a)=\textsf{1}_B\}
\]
 is a complement to $\iota(B)$ in $A$.
\end{proposition}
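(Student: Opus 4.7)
The plan is to verify directly the three conditions defining a complement: that $\textsf{Ker}(q)$ is a subgroup of $A$, that $\iota(B)\cap\textsf{Ker}(q)=\{\textsf{1}_A\}$, and that $A=\iota(B)\cdot\textsf{Ker}(q)$. The main tools are the cocycle conditions (ZL1)--(ZL3) of Theorem \ref{T-str} together with the identity $q\iota=\textsf{Id}_B$ supplied by Remark \ref{T-str.r}, which in particular forces $\iota$ to be injective.

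First, to see that $\textsf{Ker}(q)\leqslant A$, note that $\textsf{1}_A\in\textsf{Ker}(q)$ by (ZL1). For closure under multiplication, if $q(a)=q(a')=\textsf{1}_B$ then
\[
q(aa')\stackrel{(ZL3)}{=}q\bigl(a\cdot\iota q(a')\bigr)=q(a\cdot\textsf{1}_A)=\textsf{1}_B.
\]
Closure under inverses is the one point that demands a small trick: using (ZL1) and then (ZL3) on the trivial product $a^{-1}a=\textsf{1}_A$,
\[
\textsf{1}_B=q(\textsf{1}_A)=q(a^{-1}a)\stackrel{(ZL3)}{=}q\bigl(a^{-1}\cdot\iota q(a)\bigr)=q(a^{-1}),
\]
so $a^{-1}\in\textsf{Ker}(q)$.

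The intersection condition is immediate: if $\iota(b)\in\textsf{Ker}(q)$ then $b=q\iota(b)=\textsf{1}_B$, hence $\iota(b)=\textsf{1}_A$. For the factorization, given $a\in A$ I would use the evident candidate
\[
a=\iota(q(a))\cdot\bigl(\iota(q(a))^{-1}\cdot a\bigr),
\]
whose first factor lies in $\iota(B)$; an application of (ZL2) to the second factor yields
\[
q\bigl(\iota(q(a)^{-1})\cdot a\bigr)=q(a)^{-1}\cdot q(a)=\textsf{1}_B,
\]
so the second factor lies in $\textsf{Ker}(q)$. The only step that is at all non-mechanical is closure under inverses, where (ZL3) must be read so that the element of unknown $q$-value sits in the outer slot; everything else is a direct unpacking of the axioms.
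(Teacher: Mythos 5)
Your proposal is correct and follows essentially the same route as the paper: the identity element, closure under products and inverses via (ZL1) and (ZL3), triviality of the intersection via $q\iota=\textsf{Id}_B$, and the decomposition $a=\iota q(a)\cdot\bigl(\iota(q(a))^{-1}a\bigr)$ with (ZL2) showing the second factor lies in $\textsf{Ker}(q)$ are exactly the steps in the paper's proof.
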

\begin{proof} Let $q \in \mathcal{Z}^{1}(\T^l\!\!_\iota,  (X,m_X))$, or, equivalently, let $q: A \to X$ be a map
satisfying Conditions (ZL1), (ZL2) and (ZL3). We first show that $\textsf{Ker}(q)$ is a subgroup of $A$. Indeed,
since $q(\textsf{1}_A)=\textsf{1}_B$ by (ZL1), it follows that $\textsf{1}_A \in \textsf{Ker}(q)$. Next, if
$a,a' \in \textsf{Ker}(q)$, then $q(a)=q(a')=\textsf{1}_B$ and we calculate
\[
q(a a')\stackrel{(ZL3)}=q(a \cdot \iota q(a'))=q(a \cdot \iota
(\textsf{1}_B))=q(a\textsf{1}_A)=q(a)=\textsf{1}_B.
\]
 Hence $aa' \in \textsf{Ker}(q)$.
Finally, if $a \in \textsf{Ker}(q)$, then $q(a)=\textsf{1}_B$ and we have
\[
\begin{split}
\textsf{1}_B \stackrel{(ZL1)}=q(\textsf{1}_A)= &q(a^{-1} a) \stackrel{(ZL3)}= q(a^{-1}\cdot \iota q(a))\\
=&
q(a^{-1}\cdot \iota(\textsf{1}_X))=q(a^{-1} \textsf{1}_A)=q(a^{-1}),
\end{split}
\]
 whence $a^{-1} \in \textsf{Ker}(q)$.
Thus, $\textsf{Ker}(q)$ is a subgroup of $A$.

Next we show that $\iota(B)\cap\textsf{Ker}(q)=\{\textsf{1}_A\}$ and $\iota(B)\textsf{Ker}(q)=A$.
In order to show the first equality, consider an arbitrary element $a \in \iota(B)\cap\textsf{Ker}(q)$.
Then $q(a)=\textsf{1}_B$ and $a=\iota(b)$ for some $b \in B$. But then
$b \stackrel{(ZL1)}=q\iota(b)=q(a)=\textsf{1}_B$ and hence $a=\iota(b)=\iota(\textsf{1}_B)=\textsf{1}_A$.
Therefore,
\[
\iota(B)\cap\textsf{Ker}(q)=\{\textsf{1}_A\}.
\]
We now turn to the second equality. Note first that for any $a\in A$, $\iota(q(a)^{-1}) a\in
\textsf{Ker}(q)$, since $q(\iota(q(a)^{-1}) a)\stackrel{(ZL2)}=q(a)^{-1}  q(a)=\textsf{1}_A$. It follows
that any $a \in A$ can be written in the form $\iota q(a) \cdot ((\iota(q(a)^{-1}) a))$ for $\iota q(a) \in \iota(B)$
and $(\iota(q(a)^{-1})\cdot a) \in \textsf{Ker}(q)$.
This shows that $\iota(B)\textsf{Ker}(q)=A$, and hence $\textsf{Ker}(q) \in \mathcal{C}\!_\iota$.
\end{proof}

\begin{proposition}\label{T-compl.01}
Let $\iota: B \to A$ be a monomorphism of groups. Then the assignment $q \longmapsto \emph{\textsf{Ker}}(q)$
yields a one-to-one bijection between the sets $\mathcal{Z}^{1}(\emph{\T}^l\!\!_\iota,  (B,m_B))$ and
$ \mathcal{C}\!_\iota$.
\end{proposition}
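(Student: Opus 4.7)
By Proposition \ref{T-compl.0}, the assignment $q \longmapsto \textsf{Ker}(q)$ is a well-defined map from $\mathcal{Z}^{1}(\T^l\!\!_\iota,(B,m_B))$ to $\mathcal{C}\!_\iota$. The plan is to establish bijectivity by exhibiting an explicit inverse.

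First I would handle injectivity. Suppose $q, q' \in \mathcal{Z}^{1}(\T^l\!\!_\iota,(B,m_B))$ satisfy $\textsf{Ker}(q)=\textsf{Ker}(q')=X$. The computation at the end of the previous proof shows that every $a\in A$ can be written as $a=\iota q(a)\cdot(\iota(q(a)^{-1})a)$ with the second factor in $\textsf{Ker}(q)=X$, and symmetrically $a=\iota q'(a)\cdot(\iota(q'(a)^{-1})a)$ with the second factor in $\textsf{Ker}(q')=X$. Uniqueness of the factorization $a=\iota(b)x$ with $b\in B$, $x\in X$ (which holds because $X$ is a complement to $\iota(B)$ and $\iota$ is injective) forces $\iota q(a)=\iota q'(a)$, and injectivity of $\iota$ yields $q=q'$.

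For surjectivity, given a complement $X\in\mathcal{C}\!_\iota$, I would define $q_X:A\to B$ by $q_X(a)=b_a$, where $a=\iota(b_a)x_a$ is the unique factorization with $b_a\in B$, $x_a\in X$. Then I would check the three axioms. Condition (ZL1) is immediate from $\textsf{1}_A=\iota(\textsf{1}_B)\textsf{1}_A$. For (ZL2), note that $\iota(b)a=\iota(bb_a)x_a$, so uniqueness gives $q_X(\iota(b)a)=bb_a=b\, q_X(a)$. The main verification is (ZL3): writing $a'=\iota(b_{a'})x_{a'}$, we have $\iota q_X(a')=\iota(b_{a'})=a'x_{a'}^{-1}$, so $a\cdot\iota q_X(a')=aa'x_{a'}^{-1}$. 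Writing $aa'=\iota(b_{aa'})x_{aa'}$, we obtain $a\cdot\iota q_X(a')=\iota(b_{aa'})\cdot(x_{aa'}x_{a'}^{-1})$, and since $x_{aa'}x_{a'}^{-1}\in X$, uniqueness of the factorization yields $q_X(a\cdot\iota q_X(a'))=b_{aa'}=q_X(aa')$.

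Finally, I would check that $\textsf{Ker}(q_X)=X$. By definition $q_X(a)=\textsf{1}_B$ precisely when $b_a=\textsf{1}_B$, i.e.\ when $a=x_a\in X$, so $\textsf{Ker}(q_X)=X$. This shows $q\longmapsto\textsf{Ker}(q)$ has $X\longmapsto q_X$ as a two-sided inverse and completes the proof. I do not expect serious obstacles; the only delicate point is (ZL3), where the trick is to replace $\iota q_X(a')$ by $a'x_{a'}^{-1}$ so that the computation reduces to the uniqueness of factorizations through the complement $X$.
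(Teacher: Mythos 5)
Your proposal is correct and follows essentially the same route as the paper: well-definedness via Proposition \ref{T-compl.0}, the construction of $q_X(a)=b_a$ from the unique factorization $a=\iota(b_a)x_a$, the same verification of (ZL1)--(ZL3) (including the key step for (ZL3) via $a\,\iota(b_{a'})=\iota(b_{aa'})(x_{aa'}x_{a'}^{-1})$), and the identification $\textsf{Ker}(q_X)=X$. The only cosmetic difference is that you argue injectivity directly from uniqueness of factorizations where the paper instead verifies the identity $q_{\,\textsf{Ker}(q)}=q$; these are the same computation.
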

\begin{proof} According to Proposition \ref{T-compl.0}, the assignment $q \longmapsto \textsf{Ker}(q)$ yields a map
$\textsf{Ker}: \mathcal{Z}^{1}(\T^l\!\!_\iota,  (B,m_B))\to \mathcal{C}\!_\iota$.
For any complement $X \subseteq A$ to $\iota(B)$ in $A$, define a map $q_X: A \to B$ by putting $q_X(a)=b_a$, where
$b_a\in B$ is such that $a$ can be written in the form $ \iota(b_a) x_a$ for a unique $x_a \in X$.
We claim that the map $q_X: A \to B$ satisfies Conditions (ZL1), (ZL2) and (ZL3).

Since $\iota(b)=\iota(b) \cdot \textsf{1}_A$ for all $b \in B$, it follows that $q_X(\iota(b))=b$. Then, in particular,
$q_X(\textsf{1}_A)=q_X(\iota(\textsf{1}_B))=\textsf{1}_B$.  Thus Condition (ZL1) is satisfied. Condition (ZL2)
is also satisfied, as the following calculation shows
\[
q_X(\iota(b)\,a)=q_X(\iota(b)\, \iota(b_a)\, x_a)=q_X(\iota(b \, b_a)\, x_a)=b\, b_a=b \, q_X(a)
\]

Next, if $a=\iota(b_a)\, x_a$ and $a'=\iota(b_{a'})\, x_{a'}$, then $q_X(a')=x_{a'}$ and we have
\begin{equation}\label{sb}q_X(a \cdot \iota q_X(a'))=q_X(a \, \iota(b_{a'})). \end{equation}
Moreover, if  $aa'=\iota(b_{a a'})\, x_{aa'}$, then $a \, \iota(b_{a'})\, x_{a'}=\iota(b_{a \,a'}) x_{a \, a'}$. Hence
$$
a \iota(b_{a'})=\iota(b_{a \, a'}) (x_{a \, a'}\, x_{a'}^{-1}),
$$
implies that $q_X(a \,\iota(b_{a'}))=b_{a a'}=q_X(a \,a')$ since clearly $x_{a \,a'} x_{a'}^{-1}\in X$. Then it follows from (\ref{sb}) that $q_X(a \,a')=q_X(a \cdot \iota q_X(a'))$.
This yields  Condition (ZL3).

Now, let us prove  that
\[
\textsf{Ker} (q_X)=X\quad \text{and}\quad q_{\,\textsf{Ker}(q)}=q,\qquad   \big(\forall X \in \mathcal{C}\!_\iota,\quad  q\in \mathcal{Z}^{1}(\T^l\!\!_\iota,  (B,m_B))\big).
\]
Obviously $X\ni x=\textsf{1}_A  x=\iota (\textsf{1}_B)\, x $, so $q_X(x)=\textsf{1}_B$ and
$X \subseteq \textsf{Ker}(q_X)$. Conversely, if $a \in A$ is such that $q_X(a)=\textsf{1}_B$, then $a=\iota (\textsf{1}_B) \, x_a$
for some $x_a \in X$, so
\[
a=\iota (\textsf{1}_B) \, x_a=\textsf{1}_A \, x_a=x_a \in X.
\]
Hence $\textsf{Ker}(q_X) \subseteq X$.
This proves that $\textsf{Ker}(q_X)=X$.

To show the equality $q_{\,\textsf{Ker}(q)}=q$, consider an arbitrary $q\in \mathcal{Z}^{1}(\T^l\!\!_\iota,  (B,m_B))$.
Then $\textsf{Ker}(q) \in \mathcal{C}\!_\iota$ and hence any $a \in A$ can be written in the form $\iota(b_a)\, x_a$ with
$b_a \in B$ and $x_a \in \textsf{Ker}(q)$. Then $$q(a)=q(\iota(b_a)\, x_a)=b_a  q(x_a)=b_a \, \textsf{1}_B=b_a=q_{\textsf{Ker}(q)}(a).$$
This completes the proof.
\end{proof}

The following is a particular case of \cite[Chapter 5, Theorem 12]{Dummit}.

\begin{theorem}\label{dumt} Let $X,B$ be subgroups of a group $A$ such that $X$ is a complement to $B$ in $A$. If $B$ is normal in $A$,
then $A$ is (isomorphic to) a semidirect product of $B$ and $X$.
\end{theorem}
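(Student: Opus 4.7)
The plan is to exhibit an explicit isomorphism from an external semidirect product $B \rtimes_{\varphi} X$ onto $A$. First I would use the normality of $B$ in $A$ to define a conjugation action $\varphi: X \to \mathsf{Aut}(B)$ by $\varphi(x)(b) = xbx^{-1}$; this lands in $B$ precisely because $B$ is normal, each $\varphi(x)$ is an automorphism of $B$ with inverse $\varphi(x^{-1})$, and $\varphi$ itself is a group homomorphism by the functoriality of conjugation.

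With $\varphi$ in hand, I would form the external semidirect product $B \rtimes_{\varphi} X$ equipped with the standard multiplication
\[
(b_1, x_1) \cdot (b_2, x_2) = \bigl(b_1 \cdot \varphi(x_1)(b_2),\, x_1 x_2\bigr) = \bigl(b_1\, x_1 b_2 x_1^{-1},\, x_1 x_2\bigr),
\]
and define $\psi: B \rtimes_{\varphi} X \to A$ by $\psi(b,x) = bx$. A direct computation then shows $\psi$ is a homomorphism:
\[
\psi\bigl((b_1,x_1)(b_2,x_2)\bigr) = b_1 x_1 b_2 x_1^{-1} \cdot x_1 x_2 = (b_1 x_1)(b_2 x_2) = \psi(b_1,x_1)\, \psi(b_2,x_2).
\]

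For bijectivity, I would invoke the complement hypothesis. Surjectivity is immediate from $A = BX$. For injectivity, if $\psi(b, x) = \psi(b', x')$, i.e.\ $bx = b'x'$, then $(b')^{-1}b = x'x^{-1} \in B \cap X = \{\mathsf{1}_A\}$, forcing $b = b'$ and $x = x'$; this is precisely the uniqueness of the factorization $a = bx$ recalled at the beginning of Section~\ref{grp.}.

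There is no substantive obstacle here: the only thing to check carefully is that conjugation by elements of $X$ genuinely preserves $B$ (which uses normality in an essential way, and is the single place where the hypothesis enters) and that the factorization of each $a \in A$ as $bx$ with $b \in B$, $x \in X$ is unique (which is exactly what being a complement provides). Once these two ingredients are in place, the verification that $\psi$ is a group isomorphism is routine algebra.
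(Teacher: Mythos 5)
Your proof is correct: the conjugation action $\varphi(x)(b)=xbx^{-1}$ is well defined by normality of $B$, the map $\psi(b,x)=bx$ is a homomorphism by your displayed computation, and the complement hypothesis gives exactly surjectivity (from $A=BX$) and injectivity (from $B\cap X=\{\textsf{1}_A\}$). The paper itself offers no proof here, simply citing this as a particular case of \cite[Chapter 5, Theorem 12]{Dummit}, and your argument is precisely the standard one given in that reference, so there is nothing further to reconcile.
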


Combining this theorem with Proposition \ref{T-compl.01}, we obtain
\begin{proposition}\label{dummit} Let $\iota: B \to A$ be a monomorphism of groups such that $\iota(B)$ is normal in $A$. Then
any $\emph{\T}^l\!\!_\iota$-algebra structure on $(B,m_B)$ is of the form $B \rtimes X \xr{p_B} B$ for some (left)
semi-direct presentation $A=B \rtimes X$ of $A$.
\end{proposition}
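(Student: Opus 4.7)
The plan is to combine Proposition \ref{T-compl.01} with Theorem \ref{dumt} and then identify the resulting projection with the given $\T^l\!\!_\iota$-algebra structure. Start with an arbitrary element $q \in \mathcal{Z}^{1}(\T^l\!\!_\iota, (B,m_B))$, regarded as a $\T^l\!\!_\iota$-algebra structure on $(B, m_B)$. By Proposition \ref{T-compl.01}, the subset $X := \textsf{Ker}(q)$ is a complement to $\iota(B)$ in $A$, and under the bijection $q \longleftrightarrow \textsf{Ker}(q)$ one has $q = q_X$; explicitly, for each $a \in A$ the value $q(a)$ is the unique $b_a \in B$ appearing in the decomposition $a = \iota(b_a)\,x_a$ with $x_a \in X$.

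Next, exploit the hypothesis that $\iota(B)$ is normal in $A$. Since $X$ is a complement to $\iota(B)$ and $\iota(B) \trianglelefteq A$, Theorem \ref{dumt} gives a semidirect-product presentation $A \cong \iota(B) \rtimes X$, and since $\iota$ is a monomorphism we may identify $\iota(B)$ with $B$ and rewrite this as $A \cong B \rtimes X$. Under this identification, the element $a \in A$ corresponds to the pair $(b_a, x_a) \in B \rtimes X$, where the pair is read off from the same unique factorization $a = \iota(b_a)\,x_a$ that defines what it means for $X$ to be a complement. Therefore the projection $p_B: B \rtimes X \to B$ onto the first factor sends $a$ to $b_a$, which is precisely $q_X(a) = q(a)$.

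Putting the two steps together, $q$ agrees with the composite $A \xrightarrow{\cong} B \rtimes X \xrightarrow{p_B} B$ for the semidirect-product presentation $A = B \rtimes X$ provided by Theorem \ref{dumt} applied to $X = \textsf{Ker}(q)$, which is exactly the form claimed. I do not anticipate a serious obstacle: the only point requiring care is the compatibility between the unique decomposition $a = \iota(b_a)\,x_a$ used to define $q_X$ in Proposition \ref{T-compl.01} and the one underlying the semidirect-product isomorphism in Theorem \ref{dumt}, but these are literally the same decomposition since both are characterized by the complement property $B \cap X = \{\textsf{1}_A\}$ together with $BX = A$.
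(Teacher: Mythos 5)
Your argument is correct and is exactly the route the paper intends: the paper derives Proposition \ref{dummit} by simply ``combining'' Theorem \ref{dumt} with Proposition \ref{T-compl.01}, and your proposal fills in precisely those details, including the (correct) observation that the unique decomposition $a=\iota(b_a)x_a$ defining $q_X$ is the same one underlying the semidirect-product isomorphism. No issues.
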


\begin{proposition}\label{T-compl.02} Let $\iota: B \to A$ be a monomorphism of groups.
Two $\emph{\T}^l\!\!_\iota$-algebra structures on $(B,m_B)$ are isomorphic if and only if
their corresponding elements of $\mathcal{C}\!_\iota$ are conjugate in $A$.
\end{proposition}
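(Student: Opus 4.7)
My plan is to combine Proposition \ref{iso-T-str} (which characterizes isomorphism of $\T^l\!\!_\iota$-algebra structures via an invertible element $b_0 \in B$ satisfying $q(a)b_0 = q'(a\cdot\iota(b_0))$) with the bijection $q \mapsto \textsf{Ker}(q)$ of Proposition \ref{T-compl.01}, and with Lemma \ref{ccj} (which tells us that $A$-conjugacy of complements to $\iota(B)$ coincides with $\iota(B)$-conjugacy). This reduces the claim to showing that, under the bijection, the relation ``$q(a)b_0 = q'(a\cdot\iota(b_0))$ for some invertible $b_0 \in B$'' on the $\mathcal{Z}^1$-side matches the relation ``$\textsf{Ker}(q') = \iota(b_0)^{-1}\,\textsf{Ker}(q)\,\iota(b_0)$ for some $b_0 \in B$'' on the $\mathcal{C}_\iota$-side.

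For the forward direction, assume $q$ and $q'$ are isomorphic via some invertible $b_0 \in B$. For any $a \in A$, writing an arbitrary element of $A$ in the form $a\iota(b_0)$ (possible since $\iota(b_0)$ is invertible), the identity $q'(a\iota(b_0)) = q(a)b_0$ shows $a\iota(b_0)\in\textsf{Ker}(q')$ iff $q(a) = b_0^{-1}$, i.e.\ iff $a = \iota(b_0)^{-1}x$ with $x\in\textsf{Ker}(q)$. Hence $\textsf{Ker}(q') = \iota(b_0)^{-1}\,\textsf{Ker}(q)\,\iota(b_0)$, so the two kernels are $A$-conjugate.

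For the reverse direction, suppose $\textsf{Ker}(q')$ is $A$-conjugate to $\textsf{Ker}(q)$; by Lemma \ref{ccj} there is $b_0 \in B$ such that $\textsf{Ker}(q') = \iota(b_0)^{-1}\,\textsf{Ker}(q)\,\iota(b_0)$. Define a candidate map $q'' \colon A \to B$ by $q''(a) = q(a\cdot\iota(b_0)^{-1})\,b_0$. A short check verifies that $q''$ satisfies (ZL1)--(ZL3): (ZL1) uses $q\iota = \textsf{Id}_B$ (Remark \ref{T-str.r}); (ZL2) is immediate from (ZL2) for $q$; and (ZL3) follows because $\iota q''(a') = \iota q(a'\iota(b_0)^{-1})$, so (ZL3) for $q''$ reduces to (ZL3) for $q$ applied to $a$ and $a'\iota(b_0)^{-1}$. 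By construction $q''(a\iota(b_0)) = q(a)b_0$, so Proposition \ref{iso-T-str} gives $q \sim q''$. Finally, a direct computation analogous to the forward direction shows $\textsf{Ker}(q'') = \iota(b_0)^{-1}\,\textsf{Ker}(q)\,\iota(b_0) = \textsf{Ker}(q')$, and the injectivity of $\textsf{Ker}$ from Proposition \ref{T-compl.01} forces $q'' = q'$. Therefore $q \sim q'$.

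The only slightly delicate step I anticipate is the construction and bookkeeping for $q''$ in the reverse direction: one needs to verify all three cocycle conditions without circularity, then check that its kernel is exactly the prescribed conjugate. Once $q''$ is in hand, the uniqueness clause of Proposition \ref{T-compl.01} closes the argument cleanly.
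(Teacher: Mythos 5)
Your proof is correct, and the forward direction is essentially identical to the paper's: both unwind the relation $q(a)\,b_0=q'(a\cdot\iota(b_0))$ from Proposition \ref{iso-T-str} into $\textsf{Ker}(q')=\iota(b_0)^{-1}\,\textsf{Ker}(q)\,\iota(b_0)$, and both reduce $A$-conjugacy to $\iota(B)$-conjugacy via Lemma \ref{ccj}. The converse is where you genuinely diverge. The paper works directly with the given $q'$: from the conjugacy hypothesis it first extracts the identity $q'(x\cdot\iota(b_0))=b_0$ for all $x\in\textsf{Ker}(q)$, then uses the unique factorization $a=\iota(b_a)\,y_a$ with $y_a\in\textsf{Ker}(q)$ (available since $\textsf{Ker}(q)$ is a complement) together with (ZL2) to verify $q'(a\cdot\iota(b_0))=b_a b_0=q(a)\,b_0$ outright. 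You instead transport $q$ along $b_0$ to an auxiliary cocycle $q''(a)=q(a\,\iota(b_0)^{-1})\,b_0$, verify (ZL1)--(ZL3) for it (your reduction of (ZL3) for $q''$ to (ZL3) for $q$ via the cancellation $\iota q''(a')=\iota q(a'\iota(b_0)^{-1})\cdot\iota(b_0)$ is sound), note that $q\sim q''$ by construction and that $\textsf{Ker}(q'')=\iota(b_0)^{-1}\,\textsf{Ker}(q)\,\iota(b_0)=\textsf{Ker}(q')$, and then invoke the injectivity of the $\textsf{Ker}$ bijection from Proposition \ref{T-compl.01} to conclude $q''=q'$. The paper's route is shorter because it never has to check the cocycle axioms for a new map; yours makes the transport-of-structure mechanism explicit and leans only on the already-established bijectivity of $\textsf{Ker}$, which is a clean and equally rigorous way to close the argument.
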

\begin{proof} By Proposition \ref{iso-T-str}, two $\T^l\!\!_\iota$-algebra structures $q, q'$ on $(B,m_B)$ are isomorphic
if and only if there is an element $b_0 \in U(B)$ such that the diagram
$$\xymatrix @R=.4in @C=.6in{ A \ar[d]_{-\cdot \iota (b_0)} \ar[r]^-{q}& B\ar[d]^{-\cdot b_0}\\
A \ar[r]_-{q'}& B} $$  commutes, or equivalently,

\begin{equation}\label{sbb}
q(a)\, b_0=q'(a\cdot \iota (b_0)), \qquad\quad   (\forall  a \in A).
\end{equation}

\noindent
Assume  that $q, q'$ are isomorphic $\T^l\!\!_\iota$-algebra structures on $(B,m_B)$. Obviously,
\begin{align*}
a \in \textsf{Ker}(q)\Longleftrightarrow q(a)=\textsf{1}_B&\\
&\Longleftrightarrow q(a)\, b_0=b_0 & \text{since}\quad  b_0 \in U(B)\\
&\Longleftrightarrow q'(a\cdot \iota (b_0))=b_0  & \text{by}\quad (\ref{sbb})\\
&\Longleftrightarrow  b_0^{-1}\, q'(a\cdot \iota (b_0))=\textsf{1}_B & \\
&\Longleftrightarrow q'(\iota(b_0)^{-1}\cdot a\cdot \iota (b_0))=\textsf{1}_B & \text{by}\quad  (\emph{ZL2})\\
&\Longleftrightarrow \iota(b_0)^{-1}\cdot a\cdot \, (b_0) \in \textsf{Ker}(q') & \\
&\Longleftrightarrow a \in \iota (b_0) \cdot \textsf{Ker}(q') \cdot \iota(b_0)^{-1}.
\end{align*}
Thus, $\textsf{Ker}(q) = \iota(b_0)\cdot \textsf{Ker}(q') \cdot \iota(b_0)^{-1}$.

Conversely, suppose  $\textsf{Ker} (q') = \iota(b_0)^{-1}\cdot \textsf{Ker}(q) \cdot \iota(b_0)$.  For each  $x \in A$, we have
\begin{align*}
x \in  \textsf{Ker}(q)
&\Longleftrightarrow \iota(b_0)^{-1}\cdot x \cdot  \iota(b_0)\in \textsf{Ker} (q')\\
&\Longleftrightarrow  q'(\iota(b_0)^{-1}\, x \, \iota(b_0))=\textsf{1}_B\\
&\Longleftrightarrow b_0^{-1}\, q'( x \cdot  \iota(b_0))=\textsf{1}_B  & \text{by}\quad  (\emph{ZL2})\\
&\Longleftrightarrow  q'( x \cdot  \iota(b_0))=b_0.
\end{align*}
Thus \begin{equation}\label{qq}
         q'( x \cdot  \iota(b_0))=b_0,  \qquad \qquad (\forall x \in  \textsf{Ker}(q)).
       \end{equation}

Since $\textsf{Ker}(q)$ is a complement to $\iota(B)$ in $A$, each  $a \in A$ can be expressed uniquely in the form
$a=\iota(b_a)\, y_a$, where  $b_a\in B$ and $y_a\in \textsf{Ker}(q)$.  Then for all $a \in A$,
$$q'(a \cdot \iota(b_0))= q'( \iota(b_a)\, y_a \, \iota(b_0))\stackrel{(ZL2)}=b_a\, q'( y_a \cdot \iota(b_0))\stackrel{(\ref{qq})}=b_a\, b_0=q(a)\,b_0.$$
Consequently, $q, q'$ are isomorphic $\T^l\!\!_\iota$-algebra structures on $(B,m_B)$, as desired.
\end{proof}

To each  left $B$-set $X$ we can  naturally associate the \emph{action groupoid} $X/\!/B$, whose objects are elements of $X$, and whose morphisms from $X\ni x\mapsto x'\in X$ are the elements  $b \in B$ such that   $bx = x'$. Clearly,
\begin{itemize}
  \item the group $\ensuremath{\mathsf{Aut}}_{^{B}\verb"Set"}(B,m_B)$ acts on $\mathcal{Z}^{1}(\T^l\!\!_\iota,  (B,m_B))$ (see, \ref{des.coh.});
  \item $\ensuremath{\mathsf{Aut}}_{^{B}\verb"Set"}(B,m_B)\simeq B$, and,
  \item $B$ acts by conjugation on $\mathcal{C}_\iota$, since every $A$-conjugate of $\iota(B) \in \mathcal{C}_\iota$
  (which is in fact a $B$-conjugate) is also a complement to $\iota(B)$ in $A$ (see Lemma \ref{ccj} and its proof),
\end{itemize}
so we have the groupoids  $\mathcal{Z}^{1}(\T\!_\iota,  (B,m_B)) /\!/B$ and  $\mathcal{C}\!_\iota /\!/ B $. By  combining Propositions \ref{T-compl.01} and \ref{T-compl.02}, we obtain the following.

\begin{theorem}\label{T-compl.} Let $\iota: B\to A$ be a morphism of groups and let $\emph{\T}^l\!\!_\iota$ be the induced monad on
${^{B}\verb"Set"}$. The assignment $$q \longmapsto \emph{\textsf{Ker}}(q)=\{a\in A: q(a)=\emph{\textsf{1}}_B\}$$ yields an isomorphism of groupoids $ \mathcal{Z}^{1}(\emph{\T}^l\!\!_\iota,  (B,m_B))/\!/B\simeq \mathcal{C}\!_\iota /\!/ B$.  Moreover, this isomorphism induces a bijection of sets
$$\D^1(\emph{\T}^l\!\!_\iota,(B, m_B)) \simeq \pi_0(\mathcal{C}_\iota/\!/B),$$ in which   $\pi_0(\mathcal{C}_\iota/\!/B)$ is the set of
connected components of the groupoid $\mathcal{C}_\iota/\!/B$.
\end{theorem}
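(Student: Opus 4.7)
The plan is to bootstrap from Propositions \ref{T-compl.01} and \ref{T-compl.02}. The first already yields the object bijection $q \mapsto \textsf{Ker}(q)$ between $\mathcal{Z}^{1}(\T^l\!\!_\iota,(B,m_B))$ and $\mathcal{C}_\iota$ (if both sets are empty the theorem is vacuous; otherwise Remark \ref{T-str.r} forces $\iota$ to be a monomorphism, so the hypothesis of Proposition \ref{T-compl.01} is satisfied). What is left is to promote this to an isomorphism of action groupoids by checking that it intertwines the two $B$-actions listed just before the statement.

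First I would make the identification $\ensuremath{\mathsf{Aut}}_{^{B}\verb"Set"}(B, m_B) \simeq B$ completely explicit, reusing the footnote in the proof of Proposition \ref{iso-T-str}: every automorphism $f$ of the left $B$-set $(B, m_B)$ is right multiplication $f(b) = b \cdot b_0$ by $b_0 = f(\textsf{1}_B)$, and $b_0$ is invertible because $B$ is a group. Under this identification, equation~(\ref{equivalent}) shows that the induced action on cocycles sends $q$ to $q^{b_0}$, where $q^{b_0}(a) = q(a \cdot \iota(b_0)^{-1}) \cdot b_0$.

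Next I would verify equivariance of $\textsf{Ker}$. A short computation using (ZL2) --- essentially the one appearing in the first half of the proof of Proposition \ref{T-compl.02} --- shows
\[
\textsf{Ker}(q^{b_0}) = \iota(b_0)^{-1} \cdot \textsf{Ker}(q) \cdot \iota(b_0),
\]
which is, up to convention, the conjugation action of $B$ on $\mathcal{C}_\iota$ introduced after Lemma \ref{ccj}. Combined with the object bijection of Proposition \ref{T-compl.01}, this gives the required isomorphism of action groupoids.

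The $\pi_0$ statement then comes for free: by the definition recalled in Section \ref{des.coh.}, $\D^1(\T^l\!\!_\iota,(B,m_B))$ is precisely the orbit set of the $\ensuremath{\mathsf{Aut}}_{^{B}\verb"Set"}(B,m_B)$-action on $\mathcal{Z}^{1}(\T^l\!\!_\iota,(B,m_B))$, i.e. $\pi_0$ of the left-hand action groupoid, and any isomorphism of groupoids induces a bijection of connected components. The main obstacle I anticipate is keeping the left/right conventions for the two $B$-actions aligned --- transport of structure naturally involves right multiplication by $b_0$ on $(B, m_B)$, while conjugation on $\mathcal{C}_\iota$ is variously written with $\iota(b_0)$ on one side or the other --- but once that bookkeeping is settled the result is an immediate consequence of the prior two propositions.
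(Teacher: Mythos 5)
Your proposal is correct and follows the same route as the paper, whose proof of Theorem \ref{T-compl.} consists precisely of the instruction to combine Propositions \ref{T-compl.01} and \ref{T-compl.02}; in fact you supply a detail the paper leaves implicit, namely the explicit $B$-equivariance computation $\textsf{Ker}(q^{b_0})=\iota(b_0)^{-1}\cdot\textsf{Ker}(q)\cdot\iota(b_0)$ needed to upgrade the object bijection plus the $\pi_0$ statement to a genuine isomorphism of action groupoids. Your remark about the hypothesis (morphism versus monomorphism) is also well taken, though note that your patch does not cover the degenerate case where $\mathcal{Z}^{1}(\T^l\!\!_\iota,(B,m_B))=\varnothing$ while $\mathcal{C}_\iota\neq\varnothing$ for a non-injective $\iota$ --- a gap in the paper's own statement rather than in your argument.
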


Given a group $A$, we write $\textsf{FAC}(A)$ for the set of all factorizations of $A$, i.e.
\[ \textsf{FAC}(A)=\{(B,X) : B < A, X < B, B\cap X=\{\textsf{1}_A\} \,\,\text{and}\,\, BX=A \}.\]
It is easy to see that
\[
\textsf{FAC}(A)=\bigsqcup_{B < A}\mathcal{C}_{\imath \!_B}.
\] It then follows from Proposition \ref{T-compl.01} that

\begin{theorem}\label{class.} For any group $A$, the assignment $q \longmapsto \emph{\textsf{Ker}}(q)$ yields a one-to-one
bijection
\[
\bigsqcup_{B < A}\mathcal{Z}^{1}(\emph{\T}^l\!\!_{\imath\!_B},  (B,m_{B})) \simeq \emph{\textsf{FAC}}(A)
\]  of sets.
\end{theorem}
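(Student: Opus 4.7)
The plan is to deduce this directly from the decomposition of $\textsf{FAC}(A)$ stated just above the theorem together with Proposition~\ref{T-compl.01} applied to each proper subgroup $B<A$.

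First, I would observe that the identity
\[
\textsf{FAC}(A)=\bigsqcup_{B < A}\mathcal{C}_{\imath \!_B}
\]
holds essentially by definition: an element of $\textsf{FAC}(A)$ is a pair $(B,X)$ where $B<A$ is a proper subgroup and $X$ is a complement to $B$ in $A$; grouping these pairs by their first coordinate recovers the disjoint union, since the sets $\mathcal{C}_{\imath\!_B}$ for different $B$ are automatically disjoint as subsets of the power set of $A$ together with the label $B$.

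Next, for each fixed proper subgroup $B<A$, the canonical inclusion $\imath_B\colon B\to A$ is a monomorphism of groups, so Proposition~\ref{T-compl.01} applies and yields a bijection
\[
\textsf{Ker}\colon \mathcal{Z}^{1}(\T^l\!\!_{\imath\!_B},(B,m_B))\xrightarrow{\ \simeq\ }\mathcal{C}_{\imath\!_B},\qquad q\longmapsto \textsf{Ker}(q).
\]
Taking the disjoint union of these bijections over all $B<A$ assembles them into a single bijection
\[
\bigsqcup_{B<A}\mathcal{Z}^{1}(\T^l\!\!_{\imath\!_B},(B,m_B))\xrightarrow{\ \simeq\ }\bigsqcup_{B<A}\mathcal{C}_{\imath\!_B}=\textsf{FAC}(A),
\]
which is the required map $q\longmapsto \textsf{Ker}(q)$.

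There is essentially no obstacle: once the set-level decomposition of $\textsf{FAC}(A)$ is noted, the theorem is a formal consequence of Proposition~\ref{T-compl.01}. The only mild subtlety worth mentioning is that the map in the statement really is well-defined as a function out of a disjoint union, because each $q$ carries with it the information of the subgroup $B$ from which it originates (so the resulting complement is tagged with the correct first coordinate in the pair $(B,\textsf{Ker}(q))$).
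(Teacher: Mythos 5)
Your proposal is correct and follows essentially the same route as the paper: the authors likewise record the decomposition $\textsf{FAC}(A)=\bigsqcup_{B<A}\mathcal{C}_{\imath\!_B}$ immediately before the theorem and then obtain the bijection by applying Proposition~\ref{T-compl.01} to each proper subgroup $B<A$. Nothing is missing from your argument.
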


Define a relation $\cong$ on the set $\textsf{FAC}(A)$ by $(B,X)\!\cong\! (B',X')$ if and only if
$B=B'$ and $X$ and $X'$ are conjugate in $A$. One  easily checks that $\cong$
is an equivalence relation on the set $\textsf{FAC}(A)$. We write $\textsf{Fac}(A)$ for the quotient set $\textsf{FAC}(A)/\!\cong$.
Then it follows by the very definition of the sets
\[\pi_0(\mathcal{C}_{\imath \!_B}/\!/B), B < A,
\]
that
\[
\textsf{Fac}(A)=\bigsqcup_{B < A}\pi_0(\mathcal{C}_{\imath \!_B}/\!/B).
\] Applying now Theorem  \ref{T-compl.} to the present case, we get
\begin{theorem}\label{class.eq.} For any group $A$, the assignment $q \longmapsto \emph{\textsf{Ker}}(q)$ yields a one-to-one
bijection
\[
\bigsqcup_{B < A}\D^{1}(\emph{\T}^l\!\!_{\imath\!_B},  (B,m_{B})) \simeq \emph{\textsf{Fac}}(A)
\]  of sets.
\end{theorem}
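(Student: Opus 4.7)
The plan is to assemble the theorem as a straightforward corollary of Theorem \ref{T-compl.}, applied separately for each proper subgroup $B < A$ and then summed over $B$. All the serious content --- verifying that $\textsf{Ker}(q)$ yields a complement, that two $\T^l\!\!_\iota$-algebra structures are isomorphic iff their kernels are $A$-conjugate (equivalently, $B$-conjugate by the proof of Lemma \ref{ccj}), and that this induces an isomorphism of action groupoids --- is already done in Propositions \ref{T-compl.0}, \ref{T-compl.01}, \ref{T-compl.02} and Theorem \ref{T-compl.}. So the proof is mostly bookkeeping.

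First I would recall the decomposition already noted in the excerpt: by definition of $\textsf{FAC}(A)$, a factorization $(B,X)$ is determined by first choosing the proper subgroup $B < A$ and then choosing $X \in \mathcal{C}_{\imath\!_B}$, which gives
\[
\textsf{FAC}(A) = \bigsqcup_{B < A} \mathcal{C}_{\imath\!_B}.
\]
Since the equivalence relation $\cong$ on $\textsf{FAC}(A)$ only identifies pairs $(B,X)$ and $(B,X')$ with the same first coordinate and with $X,X'$ conjugate in $A$ --- and by Lemma \ref{ccj} this conjugacy may be taken to live inside $B$ --- passing to $\cong$-classes acts componentwise and yields
\[
\textsf{Fac}(A) = \bigsqcup_{B < A} \pi_0(\mathcal{C}_{\imath\!_B}/\!/B),
\]
as already observed in the excerpt.

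Second, I would apply Theorem \ref{T-compl.} for each $B < A$ separately. That theorem produces, via the rule $q \longmapsto \textsf{Ker}(q)$, a bijection
\[
\D^1(\T^l\!\!_{\imath\!_B},(B,m_B)) \;\simeq\; \pi_0(\mathcal{C}_{\imath\!_B}/\!/B).
\]
Taking the disjoint union over all proper subgroups $B < A$ and combining with the displayed decomposition of $\textsf{Fac}(A)$ yields the bijection claimed in the statement, with the assignment $q \longmapsto \textsf{Ker}(q)$ being compatible componentwise.

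Strictly speaking the only point needing explicit mention is that the map $q \longmapsto \textsf{Ker}(q)$ is well-defined on equivalence classes in $\D^1$: if $q$ and $q'$ represent the same class, then they are isomorphic as $\T^l\!\!_{\imath\!_B}$-algebra structures, so by Proposition \ref{T-compl.02} the subgroups $\textsf{Ker}(q)$ and $\textsf{Ker}(q')$ are $A$-conjugate, hence represent the same class in $\textsf{Fac}(A)$. This is the only item to spell out; everything else is immediate from the cited results and the disjoint-union identities above. I do not anticipate a real obstacle --- the theorem is essentially a reindexing of Theorem \ref{T-compl.} across all subgroups $B < A$.
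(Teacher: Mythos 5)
Your proposal is correct and follows essentially the same route as the paper: the paper likewise records the decompositions $\textsf{FAC}(A)=\bigsqcup_{B<A}\mathcal{C}_{\imath\!_B}$ and $\textsf{Fac}(A)=\bigsqcup_{B<A}\pi_0(\mathcal{C}_{\imath\!_B}/\!/B)$ and then obtains the theorem by applying Theorem \ref{T-compl.} componentwise. Your extra remark on well-definedness of $q\mapsto\textsf{Ker}(q)$ on $\D^1$-classes is already subsumed in Theorem \ref{T-compl.} (via Proposition \ref{T-compl.02}), so nothing further is needed.
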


\section{COMPARISON WITH SERRE'S NON-ABELIAN COHOMOLOGY}\label{comp.}
\begin{thm} \label{ac.mon.}\em
Let $X$ be a monoid. It is well-known  that to define  an internal monoid object in the category ${^{X}}\!\verb"Sets"$
is the same as to give a left action of $X$ on a monoid. A \emph{(left) action of $X$ on a monoid $B$} is a map
$\star: X \times B \to  B \qquad  (x,b)\longmapsto {x\star b}$, such that
\begin{itemize}
  \item [\rm{(i)}] $\textsf{1}\!_{X}\star b=b;$
  \item [(ii)] $(x_1 x_2)\star b=x_1 \star (x_2 \star b);$
  \item [(iii)] $ x\star \textsf{1}\!_{B}=\textsf{1}\!_{B}$;
  \item [(iv)] $x \star (b_1  b_2)  =(x \star b_1) (x \star b_2)$,\qquad\qquad   $(x,x_1,x_2\in X, \quad b,b_1,b_2 \in B )$.
\end{itemize}
When $X$ acts on a monoid $B$ from the left, we sometimes say that \emph{$B$ is a (left) $X$-monoid}.
If  in the above definition monoids are  replaced by groups, one gets the notion of \emph{left $X$-groups}.
Let us note that in this case, (iii) is a consequence of (iv) by putting $b_1=b_2=\textsf{1}_{B}$.

Note that Conditions (i) and (ii) express the fact that $B$ is a left $X$-set, while Conditions (iii) and (iv)
express the fact that for any fixed $x\in X$, the map
\[
\mathcal{L}_x: B \to B, \qquad b \mapsto x \star b
\]
is a monoid morphism. Hence Conditions (i)--(iv) are equivalent to saying that the assignment $x \to \mathcal{L}_x$  yields a homomorphism
$\Phi: X \to \textsf{End}(B)$ of monoids, where $\textsf{End}(B)$ is  the monoid consisting of all endomorphisms of the $B$.
Because of this, a left action of a monoid $X$ on a monoid $B$ is often called a \emph{left action of $X$ on $B$ by
endomorphisms}.

In analogy with the classical case of groups, any left $X$-monoid $B$ gives rise to the \emph{semiderect product}
${B \rtimes_\Phi X}$, where  $B \rtimes_\Phi X$ is a monoid whose underlying set is the product $B\times X$
equipped with monoid structure given by
\[
(b_1, x_1) \cdot (b_2, x_2) =(b_1 \cdot (x_1 \star b_2), x_1 x_2), \quad\text{and}\quad  \textsf{1}_{B \rtimes_\Phi X} = (\textsf{1}_B,\textsf{1}_X).
\]
Note that the maps
\[
\iota_B: B \to B \rtimes X, \qquad \iota_B(b)=(b,\textsf{1}_X)
\] and
\[
\iota_X: X \to B \rtimes X, \qquad \iota_X(x)=(\textsf{1}_B,x)
\]
are  monoid homomorphisms.
\end{thm}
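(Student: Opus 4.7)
The final assertion packages together two claims: that the prescribed multiplication and unit turn $B \times X$ into a monoid $B \rtimes_\Phi X$, and that the two inclusions $\iota_B$, $\iota_X$ are monoid homomorphisms. Both will fall out of direct computation with the four action axioms (i)--(iv), so the plan is simply to unpack definitions carefully and keep track of which axiom is invoked at each step.

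First I would verify that $B \rtimes_\Phi X$ is actually a monoid. The only nontrivial point is associativity: expanding $((b_1,x_1)(b_2,x_2))(b_3,x_3)$ and $(b_1,x_1)((b_2,x_2)(b_3,x_3))$ using the prescribed formula, the $X$-coordinates match by associativity in $X$, while the $B$-coordinates reconcile by applying (ii) to rewrite $x_1 \star (x_2 \star b_3)$ as $(x_1 x_2) \star b_3$, followed by (iv) to distribute $x_1 \star (-)$ across a product in $B$. That $(\textsf{1}_B, \textsf{1}_X)$ is a two-sided unit uses (i) for the left-unit axiom and (iii) for the right-unit axiom.

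With this in place, $\iota_B$ being a monoid homomorphism is a one-line computation invoking (i):
\[
\iota_B(b_1)\cdot\iota_B(b_2) = (b_1, \textsf{1}_X)\cdot(b_2, \textsf{1}_X) = \bigl(b_1\cdot(\textsf{1}_X \star b_2),\, \textsf{1}_X\bigr) = (b_1 b_2, \textsf{1}_X) = \iota_B(b_1 b_2),
\]
together with $\iota_B(\textsf{1}_B)=(\textsf{1}_B,\textsf{1}_X)=\textsf{1}_{B \rtimes_\Phi X}$. Symmetrically, $\iota_X$ is a homomorphism by the same style of calculation, the crucial step now being $\textsf{1}_B \cdot (x_1 \star \textsf{1}_B) = \textsf{1}_B \cdot \textsf{1}_B = \textsf{1}_B$ via (iii).

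There is no genuine obstacle in this argument; it is a routine consequence of the defining axioms. The one piece of bookkeeping worth stating explicitly is the division of labor among the axioms: (i) powers the statement for $\iota_B$ and the left-unit law, (iii) powers the statement for $\iota_X$ and the right-unit law, and the pair (ii), (iv) together gives associativity of the semidirect product multiplication.
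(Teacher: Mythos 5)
Your verification is correct and is exactly the routine computation the paper leaves implicit (the paper states these facts without proof as part of a definitional remark): associativity via (ii) and (iv), the unit laws via (i) and (iii), and the two inclusions by the one-line calculations you give. Your bookkeeping of which axiom powers which step is accurate, so there is nothing to add.
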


\begin{thm}\label{nccm}\em
Let $X$ be a monoid and  let $B$ be a left $X$-monoid. The \emph{zeroth cohomology set of $X$ with coefficients in $B$}
is the subset  of elements of $B$ fixed by $X$:
\[
\mathcal{H}^0(X,B)={}^X\!B=\Big\{b \in B\mid x\star b=b \quad \forall x \in X\big\}.
\]
The set $\mathcal{H}^0(X,B)$ is a submonoid of $B$ by Conditions \ref{ac.mon.} (iii) and (iv).

A function  $q: X \to B$ is called  a \emph{1-cocycle } if
\[
q(x_1  x_2)=q(x_1) (x_1 \star q(x_2)), \qquad (\forall x_1,x_2\in X).
\]
  Write $\mathcal{Z}^{1}(X, B)$ for the set of 1-cocycles $X\to B$.
Quite obviously  the map
$$
0_{X,B}: X \to B, \quad  x \longmapsto \textsf{1}_B
$$
is a 1-cocycle. This map enables us to regard  $\mathcal{Z}^{1}(X, B)$ as a pointed set.

Two 1-cocyles $q, q': X \to B$ are called \emph{equivalent} if there exists an invertible element $b_0\in U(B)$
such that $q(x) (x \star b_0)=b_0  q'(x)$ for all $x \in X$. This provides an equivalence relation on $\mathcal{Z}^{1}(X, B)$.
The proof proceeds as in the case of groups (e.g., \cite{S}). We need to show only that $x\star b_0$ is invertible
for any $b_0\in U(B), x\in X$,  and $(x\star b_0)^{-1}=x\star b_0^{-1}$. But since $b_0\in U(B)$, it follows that
there exists an element $b_0^{-1}$ with $b_0b_0^{-1}=b_0^{-1}b_0=\textsf{1}_B$. Now we have
$$
\textsf{1}_B=x\star \textsf{1}_B=x \star (b_0 b_0^{-1})=(x \star b_0)(x \star b_0^{-1}),
$$
so $(x \star b_0)(x \star b_0^{-1})=\textsf{1}_B$. In a similar manner one
can  prove that $(x \star b_0^{-1})(x \star b_0)=\textsf{1}_B$.
Therefore, $(x\star b_0)^{-1}=x\star b_0^{-1}$.

The resulting set of equivalence classes of 1-cocycles is called the \emph{first non-abelian 1-cohomology
set of $X$ with coefficients in $B$} and is denoted by $\mathcal{H}^1(X,B)$.
It is a pointed set where the  distinguished point is the equivalence class of the  maps $0_{X,B}$.

\begin{remark}\label{serre}\em In the case where $X$ is  a group and  $B$ is a left $X$-group, the group $\mathcal{H}^0(X,B)$
and the pointed set $\mathcal{H}^1(X,B)$ coincide with the ones  introduced by Serre in \cite{S}.
\end{remark}

Let $X$ be a monoid, $B$ a left $X$-monoid and $B \rtimes X$ the corresponding semidirect product.
Direct inspection shows that the projection $p_B: B \times X \to B$ satisfies Conditions (ZL1)--(ZL3), and hence
defines the structure of a $\T^l\!\!_{\iota_B}$-algebra on $(B,m_B)$. Thus, the triple $((B,m_B), p_B)$ is an
object of the category $(^{B}\verb"Sets")^{\T^l\!\!_{\iota\!_B}}$ and hence (see \ref{des.coh.})
$\mathcal{Z}^1(\T^l\!\!_{\iota\!_B}, ((B,m_B), p_B))$  becomes a pointed set with point the $p_B$.
Then $\D^1(\T^l\!\!_{\iota\!_B}, ((B,m_B), p_B))$ is pointed set with  point $[p_B]$.

\medskip

Recall that given a monoid $M$, the \emph{opposite} monoid $M^{\text{op}}$ has the same underlying
set and identity element as $M$, and its multiplication is defined by $m \cdot^{\text{op}} m'=m'\cdot m$.

\begin{remark}\label{opp.}\em
If $B$ is a left $X$-monoid, then $B^{\text{op}}$ becomes a left $X$-monoid
via $x \star b^{\text{op}}=(x \star b)^{\text{op}}$, which is  called the \emph{opposite} of  the $X$-monoid $B$.
\end{remark}

\begin{proposition}\label{non.ab.coh.m}  In the situation described above, the assignment
$$
(q: B \times X \to B)\longmapsto (\ol q=q \cdot \iota_X: X \to B)
$$
yields an isomorphism
$$
\mathcal{Z}^1(\emph{\T}^l\!\!_{\iota\!_B}, ((B,m_B), p_B))\simeq \mathcal{Z}^1(X,B^{\!\text{op}})
$$ of pointed sets. Here, $B^{\!\text{op}}$ has a left $X$-monoid structure as given  in Remark \ref{opp.}.
\end{proposition}
\begin{proof} According to Theorem \ref{T-str} and Remark \ref{remark.str}(b),
to give a $\T^l\!\!_{\iota\!_B}$-algebra structure on $(B, m_B)\in ^B\verb"Sets"$ is to give a map
$q: B \times X \to B$ of left $B$-sets such that it sends $\textsf{1}_{B \rtimes X}=(\textsf{1}_B,\textsf{1}_X)$
to $\textsf{1}_B$ and makes the diagram
\begin{equation}\label{cocycle}
\xymatrix @R=.4in @C=.3in{(B \times X)\times (B \times X) \ar[rr]^-{m_{_{B \rtimes X}}} \ar[d]_{(B \times X)\times q}&& B \rtimes X \ar[dd]^q\\
(B \times X)\!\times B \ar[d]_{(B \times X)\times \iota_B}&&\\
(B \times X)\times(B \times X)\ar[r]_-{m_{_{B\rtimes X}}}& B\times X\ar[r]_-{q}&B}\end{equation} commutative.
Suppose first that $q: B\times X \to B$ is such a map.  Since $q(\textsf{1}_B, \textsf{1}_X)=q(\textsf{1}_{B \rtimes X})=\textsf{1}_B$,
it follows that $\ol q (\textsf{1}_X)=(q \cdot \iota_X)(\textsf{1}_X)=q(\textsf{1}_B, \textsf{1}_X)=\textsf{1}_B$.

\noindent
Next, since
$$
m_{_{B \rtimes X}}((b_1, x_1), (b_2, x_2) )=(b_1 (x_1 \star b_2),x_1 x_2)
$$
for all $b_1,b_2 \in B$ and all $x_1,x_2 \in X$, chasing $((b_1, x_1),(b_2, x_2))$ around the diagram (\ref{cocycle}) gives the
equality
$$
q(b_1 (x_1 \star b_2), x_1 x_2)=q(b_1 (x_1 \star q(b_2,x_2)),x_1).
$$
Since $q$ is assumed to be a map of left $B$-sets and since clearly $b_1 (x_1 \star b_2)\in B$ and  $b_1 (x_1 \star q(b_2,x_2))\in B$, it can be written as
$$
b_1 (x_1 \star b_2) q(\textsf{1}_B, x_1 x_2)=b_1 (x_1 \star q(b_2,x_2)) q(\textsf{1}_B,x_1).
$$
Moreover, since $x \star \textsf{1}_B=\textsf{1}_B$ for all $x \in X$, putting $b_1=b_2=\textsf{1}_B$ gives
$$
\ol q (x_1 x_2)=q(\textsf{1}_B, x_1 x_2)=(x_1 \star q(\textsf{1}_B,x_2))q(\textsf{1}_B,x_1)=(x_1 \star\ol q(x_2)) \ol q(x_1),
$$
proving that $\ol q \in \mathcal{Z}^1(X,B^{\text{op}})$.

Conversely, suppose that $q' \in \mathcal{Z}^1(X,B^{\text{op}})$ is an arbitrary 1-cocycle. We claim
that the composite
\[
\widehat{q'}: B\times X \xr{B \times  q'} B \times B \xr{m_B} B,\qquad (b,x)\longmapsto bq'(x)
\]
is a $\T^l\!\!_{\iota\!_B}$-algebra structure on $((B,m_B), p_B)$. Indeed, since $q'(\textsf{1}_X)=\textsf{1}_B$,
it follows that $\widehat{q'}(\textsf{1}_B,\textsf{1}_X)=\textsf{1}_B q'(\textsf{1}_X)=\textsf{1}_B$.
Thus, $\widehat{q'}(\textsf{1}_X)=\textsf{1}_B$. Moreover, $\widehat{q'}$ is a map of left $B$-sets, since

\[
\begin{split}
\widehat{q'}(b_1 \cdot (b_2,x))={q'}(b_1 b_2,x)&=(b_1 b_2) q' (x)\\
&=b_1 (b_2  q' (x))= b_1  \widehat{q'} (b_2, x),\qquad (b_1,b_2 \in B, x \in X).
\end{split}
\]
Next, we calculate
\begin{equation}\label{non.ab.eq.}
\begin{split}
\widehat{q'}(b_1 (x_1\star b_2), x_1 x_2)&=b_1 (x_1\star b_2) q'(x_1 x_2)\\&=
b_1 (x_1\star b_2) (x_1 \star q'(x_2)) q' (x_1).
\end{split}
\end{equation}
Here, the second equality holds since $q'$ is 1-cocycle.
Then since
\begin{align*}
&\widehat{q'}(b_1 (x_1 \star \widehat{q'}(b_2,x_2)),x_1)=\\
&=b_1 (x_1 \star \widehat{q'}(b_2,x_2)) q'(x_1)& \text{by definition of}\quad \widehat{q'}\\
&=b_1 (x_1 \star (b_2 q'(x_2)) q'(x_1)& \text{by definition of}\quad \widehat{q'}\\
&=b_1 (x_1\star b_2)(x_1\star q'(x_2) q'(x_1)& \text{by Condition (ii) of the action}\quad \star\\
&=\widehat{q'}(b_1 (x_1\star b_2), x_1 x_2), & \text{by} \qquad \eqref{non.ab.eq.}
\end{align*} it follows that $\widehat{q'}$ makes Diagram (\ref{cocycle}) commutative. This proves
that $\widehat{q'}$ gives a $\T^l\!\!_{\iota\!_B}$-algebra structure on $(B, m_B)$. It is easily
verified that $\widehat{\ol q}=q$ and  $\ol{\widehat{q'}}=q'$.   Moreover,
$\ol{p}_B: X\xr{\iota \!_X} B \times X \xr{p_B} B$ is the map $0_{X,B}: X \to B$. Hence
the assignment $q \longmapsto \ol q$ yields a bijection
$$\mathcal{Z}^{1}(\T^l\!\!_{\iota\!_B},  (B,m_B))\simeq \mathcal{Z}^{1}(X,B^{op})$$ of pointed sets.
\end{proof}

\begin{theorem}\label{non.ab.cohomology} Let  $B$ be a left $X$-monoid. The assignment
\[
(q: B \times X \to B)\longmapsto (\ol q=q \cdot \iota_X: X \to B)
\]
yields an isomorphism $$\D^1(\emph{\T}^l\!\!_{\iota\!_B}, ((B,m_B), p_B))\simeq \mathcal{H}^1(X,B^{op})$$
of pointed sets. Moreover, there is an isomorphism of groups
$$\D^0(\emph{\T}^l\!\!_{\iota\!_B}, ((B,m_B), p_B))\simeq \mathcal{H}^0(X,B^{op}).$$
\end{theorem}
\begin{proof} By Proposition \ref{iso-T-str}, two 1-cocycles $q, q' \in \mathcal{Z}^{1}(\T^l\!\!_{\iota \!_B},  (B,m_B))$
are equivalent if and only if there is an invertible element $b_0 \in B$ such that
\begin{equation}\label{non.ab.eq.1}
q(b,x) b_0=q\,'((b,x) \cdot \iota_B(b_0)), \qquad (\forall (b,x) \in B \times X). \end{equation} Since
$q(b,x) b_0=b \, \ol q(x) \, b_0$ and
$$
q\,'((b,x) \cdot \iota_B(b_0))=q\,'((b,x) \cdot (b_0, \textsf{1}_X))=q\,'(b(x \star b_0),x)=b(x \star b_0)\,\ol q\,'(x),
$$
the equation  (\ref{non.ab.eq.1}) holds if and only if
$$b \, \ol q(x) \, b_0=b(x \star b_0)\,\ol q\,'(x), \qquad (\forall (b,x) \in B \times X)
$$
or equivalently, if and only if
$$
\ol q(x) \, b_0=(x \star b_0)\,\ol q\,'(x), \qquad (\forall  x \in X).
$$
The last equation expresses  the fact that the 1-cocycles $\ol q, \ol q\,' \in \mathcal{Z}^{1}(X,B^{op})$
are equivalent, so  the map that takes $[q]$ to $[\ol{q}]$ yields an isomorphism
\[
\D^1(\T^l\!\!_{\iota\!_B}, ((B,m_B), p_B)) \to \mathcal{H}^1(X,B^{\text{op}})
\]
of pointed sets by  Proposition \ref{non.ab.coh.m}.

To show the second part of the theorem, recall first that $\D^0(\T^l\!\!_{\iota \!_B}, ((B,m_B),p_B))$
is the group of automorphisms of $((B,m_B),p_B)$ in $(^B\verb"Sets")^{\T^l\!\!_{\iota\!_B}}$. However
to give an isomorphism of $((B,m_B),p_B)$ to itself in $(^B\verb"Sets"){\T^l\!\!_{\iota \!_B}}$ is the
same as to find  an invertible  element $b_0 \in B$ making the diagram commutative
\[
\xymatrix @R=.3in @C=.4in{B \times X \ar[r]^-{-\cdot \iota_B(b_0)} \ar[d]_{p_B}& B \times X \ar[d]^{p_B}\\
B \ar[r]_{-\cdot b_0}& B} \qquad \xymatrix @R=.3in @C=.6in{(b,x) \ar[r] \ar[d]& (b,x)\cdot\iota_B(b_0)=(b (x\star b_0), x)  \ar[d]\\
b \ar[r]& b b_0\stackrel{?}=b (x\star b_0).}
\]
It follows that the diagram is commutative if and only if
\[
b b_0=b (x\star b_0), \qquad\quad  (\forall b\in B, x\in X)
\]
and hence if and only if $b_0=x \star b_0$ for all $x\in X$, or equivalently, if and only if  $ b_0 \in {}^{X}\!B$.
This proves that $\D^0(\T^l\!\!_{\iota \!_B}, ((B,m_B),p_B))\simeq \mathcal{H}^0(X, B^{\text{op}})$ as groups.
\end{proof}
\end{thm}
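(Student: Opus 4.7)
The plan is to deduce Theorem \ref{non.ab.cohomology} from Proposition \ref{non.ab.coh.m}, which already supplies the pointed bijection $\mathcal{Z}^1(\T^l\!\!_{\iota\!_B}, ((B,m_B), p_B)) \simeq \mathcal{Z}^1(X, B^{\text{op}})$ at the cocycle level. What remains is to check that the two equivalence relations agree, so as to get the $\D^1$-statement, and then to directly compute the automorphism group of $((B,m_B), p_B)$ in the Eilenberg--Moore category to get the $\D^0$-statement.

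For the $\D^1$-part I would invoke Proposition \ref{iso-T-str}: two $\T^l\!\!_{\iota\!_B}$-algebra structures $q, q'$ on $(B, m_B)$ are isomorphic iff there exists an invertible $b_0 \in B$ with $q(b,x)\, b_0 = q'\bigl((b,x)\cdot \iota_B(b_0)\bigr)$ for all $(b,x)\in B\rtimes X$. Using the semidirect product multiplication $(b,x)\cdot(b_0, \textsf{1}_X)=(b(x\star b_0), x)$ together with the fact that $q, q'$ are morphisms of left $B$-sets (so $q(b,x)=b\,\overline{q}(x)$ and $q'(b(x\star b_0), x)=b(x\star b_0)\,\overline{q}'(x)$), this relation collapses, after cancelling the common left $B$-factor, to
\[
\overline{q}(x)\, b_0 = (x\star b_0)\,\overline{q}'(x) \qquad (\forall\, x\in X).
\]
Reading the products in $B^{\text{op}}$, this is exactly the equivalence relation defining $\mathcal{H}^1(X, B^{\text{op}})$ in \ref{nccm}. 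Combined with Proposition \ref{non.ab.coh.m}, the assignment $q \mapsto \overline{q}$ therefore descends to a pointed bijection $\D^1(\T^l\!\!_{\iota\!_B}, ((B,m_B), p_B)) \simeq \mathcal{H}^1(X, B^{\text{op}})$; the basepoint $[p_B]$ is sent to $[\,\overline{p_B}\,]=[0_{X,B}]$ since $\overline{p_B}=p_B \cdot \iota_X=0_{X,B}$.

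For $\D^0$ I would work directly from the definition $\D^0(\T^l\!\!_{\iota\!_B}, ((B,m_B), p_B))=\ensuremath{\mathsf{Aut}}_{(^B\verb"Set")^{\T^l\!\!_{\iota\!_B}}}((B,m_B), p_B)$. By the footnote argument used in Proposition \ref{iso-T-str}, any automorphism $f$ of $(B,m_B)$ in $^B\verb"Set"$ has the form $f(b)=b\, b_0$ for a unique invertible element $b_0=f(\textsf{1}_B)\in B$. The $\T^l\!\!_{\iota\!_B}$-algebra condition amounts to the commutativity of
\[
\xymatrix @R=.3in @C=.6in{
B\times X \ar[r]^{-\cdot\iota_B(b_0)} \ar[d]_{p_B} & B\times X \ar[d]^{p_B}\\
B \ar[r]_{-\cdot b_0} & B,
}
\]
which at $(b,x)$ reads $b(x\star b_0)=b\, b_0$, i.e.\ $x\star b_0=b_0$ for every $x\in X$. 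Thus $f\mapsto b_0$ is a bijection onto ${}^X\!B=\mathcal{H}^0(X, B^{\text{op}})$. Tracking composition, $(f_1\circ f_2)(b)=b\, b_{0,2} b_{0,1}$, so composition corresponds to $b_{0,2}\cdot b_{0,1}$ in $B$, which is $b_{0,1}\cdot_{\text{op}} b_{0,2}$ in $B^{\text{op}}$; hence the bijection is a group isomorphism.

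The one place where care is needed is bookkeeping the side on which multiplication acts: in both the cocycle identity and the $\D^0$ group law, the non-$B$-linear factors accumulate on the right in $B$, and the switch to $B^{\text{op}}$ in the target is exactly what converts these right-accumulations into the left-accumulating cocycle identity and the group operation of Serre's non-abelian cohomology.
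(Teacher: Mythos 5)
Your proposal is correct and follows essentially the same route as the paper: for the $\D^1$-part both reduce the isomorphism condition of Proposition \ref{iso-T-str} to the identity $\ol q(x)\,b_0=(x\star b_0)\,\ol q\,'(x)$ and recognize it as cocycle equivalence in $B^{\text{op}}$, and for the $\D^0$-part both identify automorphisms of $((B,m_B),p_B)$ with right multiplication by invertible elements of ${}^X\!B$. Your explicit check that composition of automorphisms matches the multiplication of $B^{\text{op}}$ is a small but welcome addition that the paper leaves implicit.
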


\begin{theorem}\label{non.ab.coh.gr.} Let  $B$ be a left $X$-group, where $X$ is a group. The assignment
$$[q] \longmapsto [(q \iota_X)^{-1}],$$ where the map $(q \iota_X)^{-1} : X\to B$ is defined by
$(q \iota_X)^{-1}(x)=((q \iota_X)(x))^{-1}$, yields an isomorphism $$\D^1(\emph{\T}^l\!\!_{\iota \!_B},
((B,m_B), p_B))\simeq \mathcal{H}^1(X,B)$$ of pointed sets. Moreover, there is an isomorphism of groups
$$\D^0(\emph{\T}^l\!\!_{\iota \!_B}, ((B,m_B), p_B))\simeq \mathcal{H}^0(X,B).$$
\end{theorem}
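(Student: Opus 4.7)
The plan is to factor the desired isomorphism through Theorem \ref{non.ab.cohomology}, which already provides an isomorphism of pointed sets $\D^1(\T^l\!\!_{\iota\!_B},((B,m_B),p_B))\simeq \mathcal{H}^1(X,B^{\text{op}})$ sending $[q]$ to $[\ol q]=[q\iota_X]$, together with the analogous isomorphism of groups for $\D^0$. It then suffices to produce a canonical isomorphism $\mathcal{H}^1(X,B^{\text{op}})\simeq \mathcal{H}^1(X,B)$ of pointed sets and $\mathcal{H}^0(X,B^{\text{op}})\simeq \mathcal{H}^0(X,B)$ of groups whenever $B$ is a group; composing the two will yield the stated map $[q]\mapsto [(q\iota_X)^{-1}]$.

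The key observation is that when $B$ is a group, inversion $\psi\colon B^{\text{op}}\to B$, $\psi(b)=b^{-1}$, is a group isomorphism, since $\psi(b\cdot^{\text{op}}b')=(b'b)^{-1}=b^{-1}(b')^{-1}=\psi(b)\psi(b')$, and it is $X$-equivariant because each operator $x\star(-)\colon B\to B$ is a group homomorphism by Conditions \ref{ac.mon.}(iii)--(iv), giving $(x\star b)^{-1}=x\star b^{-1}$. I would then post-compose 1-cocycles with $\psi$: given $q'\in \mathcal{Z}^1(X,B^{\text{op}})$, rewriting its cocycle identity in the multiplication of $B$ gives $q'(x_1x_2)=(x_1\star q'(x_2))\cdot q'(x_1)$, and inverting yields $(\psi\circ q')(x_1x_2)=(\psi\circ q')(x_1)\cdot(x_1\star(\psi\circ q')(x_2))$, so $\psi\circ q'\in \mathcal{Z}^1(X,B)$. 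A parallel calculation preserves the equivalence relation: if $q'$ and $q''$ are equivalent in $\mathcal{Z}^1(X,B^{\text{op}})$ via $b_0\in U(B)$, then substituting $c_0=b_0^{-1}$ and inverting the resulting identity produces the equivalence relation between $\psi\circ q'$ and $\psi\circ q''$ in $\mathcal{Z}^1(X,B)$. Since $\psi(\textsf{1}_B)=\textsf{1}_B$, the base point is preserved, and the same recipe applied with $\psi^{-1}$ gives the inverse bijection. Tracing through shows that the composite sends $[q]$ to $[(q\iota_X)^{-1}]$, as required.

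For the $\D^0$ statement, $\mathcal{H}^0(X,B^{\text{op}})$ and $\mathcal{H}^0(X,B)$ have the same underlying set because $X$-fixedness depends only on the (unchanged) $X$-action, and $\psi$ restricts to a group isomorphism between them by the same reasoning. Composing with the second assertion of Theorem \ref{non.ab.cohomology} yields $\D^0(\T^l\!\!_{\iota\!_B},((B,m_B),p_B))\simeq \mathcal{H}^0(X,B)$. I do not anticipate any real obstacle: once $\psi$ is identified, the proof is essentially bookkeeping with the opposite multiplication, with group-ness of $B$ being used precisely to guarantee that inversion exists and that $x\star(-)$ respects inverses.
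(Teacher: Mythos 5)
Your proposal is correct and follows essentially the same route as the paper: both factor the isomorphism through Theorem \ref{non.ab.cohomology} and then transport along the inversion isomorphism between $B$ and $B^{\text{op}}$ (the paper phrases this as a general transport-of-structure remark for any group isomorphism $f\colon B\to B'$, while you verify the cocycle and equivalence bookkeeping for $\psi(b)=b^{-1}$ explicitly). The extra details you supply — $X$-equivariance of inversion via $(x\star b)^{-1}=x\star b^{-1}$ and the effect on the cocycle identity — are exactly what the paper leaves implicit.
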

\begin{proof} If $f: B \to B'$ is an isomorphism of groups, then the composite
$$X\times B' \xr{X \times f ^{-1}}  X\times B \xr {\star} B \xr{f} B'$$ defines a unique left
$X$-group structure on $B'$ for which $f$ becomes an isomorphism of left $X$-groups. Moreover, $f$ yields
a bijection $\mathcal{H}^1(X,B)\simeq \mathcal{H}^1(X,B')$ of pointed sets and an isomorphism
$\mathcal{H}^0(X,B)\simeq \mathcal{H}^0(X,B')$ of groups (see \ref{des.coh.}).

In particular, each  group $B$ is isomorphic to its opposite via the isomorphism $B \xr{b \to b^{-1}} B^{\text{op}}$, so
the result follows from Theorem \ref{non.ab.cohomology}.
\end{proof}

Combining Theorems \ref{T-compl.} and \ref{non.ab.coh.gr.} gives the following result, which can be seen
as a generalization of \cite[Chapter 17, Proposition 33]{Dummit}.

\begin{theorem}\label{non.ab.coh.gr.ad} For a group $X$ and a left $X$-group $B$, there is
an isomorphism $$\pi_0 (\mathcal{C}_{\iota\!_B}/\!/B)\simeq \mathcal{H}^1(X,B)$$ of pointed sets.
\end{theorem}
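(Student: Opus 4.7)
The plan is to obtain the isomorphism simply by chaining Theorems \ref{T-compl.} and \ref{non.ab.coh.gr.} together, applied to the semidirect product $A = B \rtimes X$ equipped with its canonical inclusion $\iota_B : B \to B \rtimes X$. Indeed, as soon as one has $B$ as a subgroup of the ambient group $B \rtimes X$, both sides of the desired isomorphism are expressible as versions of the same descent cohomology set $\D^1(\T^l\!\!_{\iota\!_B}, ((B,m_B), p_B))$.

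First, I would apply Theorem \ref{T-compl.} to the monomorphism $\iota_B$. This gives a bijection of sets
$$\D^1(\T^l\!\!_{\iota\!_B}, (B, m_B)) \;\simeq\; \pi_0(\mathcal{C}_{\iota\!_B}/\!/B),$$
obtained via $q \mapsto \textsf{Ker}(q)$. Next, since $(B,m_B)$ already carries the distinguished $\T^l\!\!_{\iota\!_B}$-algebra structure $p_B : B \times X \to B$, the set on the left is naturally pointed with base point $[p_B]$; and under the map $q \mapsto \textsf{Ker}(q)$ the class $[p_B]$ is sent to the conjugacy class of $\textsf{Ker}(p_B) = \{\textsf{1}_B\} \times X = \iota_X(X)$, which is the natural base point of $\pi_0(\mathcal{C}_{\iota\!_B}/\!/B)$. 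So the above bijection is in fact an isomorphism of pointed sets.

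Second, I would invoke Theorem \ref{non.ab.coh.gr.}, which provides an isomorphism of pointed sets
$$\D^1(\T^l\!\!_{\iota\!_B}, ((B, m_B), p_B)) \;\simeq\; \mathcal{H}^1(X, B).$$
Composing the two isomorphisms yields the desired $\pi_0(\mathcal{C}_{\iota\!_B}/\!/B) \simeq \mathcal{H}^1(X, B)$ of pointed sets.

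There is essentially no obstacle here beyond bookkeeping: the only point to check is the compatibility of base points, which amounts to noting that under $q \mapsto \textsf{Ker}(q)$ the distinguished algebra $p_B$ is sent to $\iota_X(X)$, and under $[q] \mapsto [(q\iota_X)^{-1}]$ the same $p_B$ is sent to the trivial $1$-cocycle $0_{X,B}$ (since $p_B \iota_X \equiv \textsf{1}_B$). Both are the distinguished elements of the respective pointed sets, so the composite isomorphism is indeed pointed, which completes the proof.
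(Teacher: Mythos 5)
Your proposal is correct and takes exactly the same route as the paper, which obtains this theorem precisely by combining Theorems \ref{T-compl.} and \ref{non.ab.coh.gr.} applied to $\iota_B: B \to B \rtimes X$. Your explicit check of base-point compatibility (that $\mathsf{Ker}(p_B)=\iota_X(X)$ and that $p_B$ maps to the trivial cocycle $0_{X,B}$) is a detail the paper leaves implicit, and it is verified correctly.
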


Finally we present  examples of the computation of $1$-descent cohomology sets.

\bigskip

For a positive integer $n$, let $S_n$ be the symmetric group of degree $n$ and let $A_n$ be the alternating subgroup of $S_n$, i.e.,
the subgroup consisting of even permutations.

\begin{thm}{\bf Example.} \label{example.1}\em
Consider $S_2$ as a subgroup in $S_3$ by letting $3$ to be a fixed point and write
$\iota_{2/3}: S_2 \to S_3$ for the corresponding inclusion. It is well known that $S_2$ is not normal in $S_3$. Since the subgroups of $S_3$ are
\begin{itemize}
\item the trivial subgroup $\{()\}$,

\item isomorphic to $S_2$: $\{(), (12)\},\, \{(), (23)\},\, \{(), (13)\}$,

\item $A_3$: $\{(), (123),\, (132)\}$, and

\item the whole group $S_3$,
\end{itemize}
it follows that $A_3$ is the unique complement to $S_2$ in $S_3$. Thus
\[
\mid\D^1(\T^l\!_{\iota_{2/3}}, S_2)\mid=1.
\]
The corresponding $\T^l\!_{\iota_{2/3}}$-algebra structure on $S_2$ is the map
\[
(q: S_3\to S_2)(\sigma)=\begin{cases}
(), \quad  &\text{if}\quad \sigma\in A_3;\\
(12),\quad &\text{if}\quad \sigma\in S_3 \setminus A_3.
\end{cases}
\]

\end{thm}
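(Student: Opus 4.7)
The plan is to invoke Theorem \ref{T-compl.}, which identifies the pointed set $\D^1(\T^l\!_{\iota_{2/3}}, (S_2, m_{S_2}))$ with $\pi_0(\mathcal{C}_{\iota_{2/3}}/\!/S_2)$, the set of $S_3$-conjugacy classes of complements to $S_2$ in $S_3$. So the computation of the cardinality reduces to enumerating such complements up to conjugation, and the identification of the representative $\T^l\!_{\iota_{2/3}}$-algebra structure then follows from the explicit formula $q_X(a) = b_a$ (where $a = \iota(b_a) x_a$ is the unique factorization with $x_a \in X$) supplied in the proof of Proposition \ref{T-compl.01}.

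First, I would use the fact that any complement $X$ to $S_2$ in $S_3$ must satisfy $|X| = |S_3|/|S_2| = 3$ and $X \cap S_2 = \{()\}$. The order-$3$ subgroups of $S_3$ are exactly its Sylow $3$-subgroups; since $A_3$ is normal of index $2$, it is the unique Sylow $3$-subgroup, and so $A_3$ is the only candidate. A direct check shows $A_3 \cap S_2 = \{()\}$ and $S_2 \cdot A_3 = S_3$ (by cardinality), so $A_3$ is indeed a complement. Being the unique complement, it is certainly the unique element of its $S_3$-conjugacy class, whence
\[
|\D^1(\T^l\!_{\iota_{2/3}}, S_2)| = |\pi_0(\mathcal{C}_{\iota_{2/3}}/\!/S_2)| = 1.
\]

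Next, to write down the corresponding algebra structure, I would invoke Proposition \ref{T-compl.01}: the $1$-cocycle associated with the complement $A_3$ is the map $q = q_{A_3}: S_3 \to S_2$ defined by $q(\sigma) = b_\sigma$, where $\sigma = b_\sigma x_\sigma$ is the unique factorization with $b_\sigma \in S_2$ and $x_\sigma \in A_3$. Since $S_3 = A_3 \sqcup (12)A_3$ is the coset decomposition, this yields $q(\sigma) = ()$ for $\sigma \in A_3$ and $q(\sigma) = (12)$ for $\sigma \in S_3 \setminus A_3$, as stated.

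The main (in fact only) obstacle is the subgroup enumeration step, which is elementary once one notes that $A_3$ is the unique Sylow $3$-subgroup of $S_3$; no further calculation is required since Theorem \ref{T-compl.} converts the cohomological computation directly into this small finite check.
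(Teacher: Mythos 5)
Your proposal is correct and follows essentially the same route as the paper: reduce via Theorem \ref{T-compl.} to counting $S_3$-conjugacy classes of complements to $S_2$, observe that $A_3$ is the unique complement, and read off the explicit cocycle $q_{A_3}$ from the formula $q_X(a)=b_a$ in the proof of Proposition \ref{T-compl.01}. The only (cosmetic) difference is that you justify uniqueness of the complement by the order constraint $|X|=3$ together with normality of the unique Sylow $3$-subgroup, whereas the paper simply enumerates all subgroups of $S_3$; both checks are correct.
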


\begin{thm}{\bf Example.} \label{example.2}\em
Let  $\iota_n: A_n \to S_n$ for some $n\geq 3$. Since $S_n/A_n \simeq S_2$, each  complement $X$ to $A_n$ in $S_n$ must be a subgroup  of $S_n$ of order 2. However  since any such  subgroup has  the form $X=((1), \sigma)$,  where $\sigma$ is a permutation of order 2, and  since the order of a permutation written in the disjoint cycle form is the least common multiple
of the lengths of the cycles, it follows that $\sigma$
must be a product of $m$ disjoint transpositions, for some $m \in \mathbb{N}$. Since $A_n X= S_n$,
$\sigma$ (and hence also $m$) must be odd. Next,
since two permutations in $S_n$ are conjugate if and only if they have the same cycle structure, it follows that
$X= ((1), \sigma)$ and $X' = ((1), \sigma')$ are conjugate in $S_n$ if and only if  both $\sigma$ and $\sigma'$
are products of $m$ disjoint 2-cycles for some odd $m$. A straightforward calculation now gives
\[
\mid\D^1(\T^l\!\!_{\iota_n}, A_n)\mid=
\begin{cases}
1, \quad &\text{if}\quad n=3;\\
k, \quad &\text{if}\quad n\in\{ 4k, 4k+1\};\\
k+1, \quad &\text{if}\quad n\in\{4k+2, 4k+3 \}.
\end{cases}
\]
\end{thm}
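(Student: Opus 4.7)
The plan is to invoke Theorem~\ref{T-compl.}, which identifies $\D^1(\T^l\!\!_{\iota_n},(A_n,m_{A_n}))$ with the set $\pi_0(\mathcal{C}_{\iota_n}/\!/A_n)$ of $A_n$-conjugacy classes of complements to $A_n$ in $S_n$. By Lemma~\ref{ccj} this coincides with the set of $S_n$-conjugacy classes of such complements, so the problem reduces to a concrete enumeration inside the symmetric group.

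First I would classify the complements themselves. Since $[S_n:A_n]=2$, every complement $X$ has order $2$, so $X=\{(1),\sigma\}$ for an involution $\sigma\in S_n$, and the requirement $A_nX=S_n$ forces $\sigma\notin A_n$, i.e.\ $\sigma$ is odd. Every involution of $S_n$ is a product of $m$ disjoint transpositions with $1\le m\le\lfloor n/2\rfloor$, whose parity agrees with that of $m$; hence the admissible $\sigma$ correspond precisely to the odd integers $m$ in $[1,\lfloor n/2\rfloor]$.

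Next I would determine when two such complements are $S_n$-conjugate. Since an order-$2$ subgroup is determined by its nontrivial element, $\{(1),\sigma\}$ and $\{(1),\sigma'\}$ are $S_n$-conjugate iff $\sigma$ and $\sigma'$ are, which by the standard cycle-type criterion in $S_n$ happens iff they involve the same number $m$ of disjoint $2$-cycles. Therefore $\pi_0(\mathcal{C}_{\iota_n}/\!/A_n)$ is in bijection with
\[
\{m\in\mathbb{N}:1\le m\le\lfloor n/2\rfloor,\ m\ \text{odd}\}.
\]

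Counting this set is the routine final step via a case split on $n\bmod 4$. Writing $n=4k+r$ with $r\in\{0,1,2,3\}$, one has $\lfloor n/2\rfloor=2k$ for $r\in\{0,1\}$ (with odd values $1,3,\dots,2k-1$, giving $k$ classes) and $\lfloor n/2\rfloor=2k+1$ for $r\in\{2,3\}$ (with odd values $1,3,\dots,2k+1$, giving $k+1$ classes); the isolated case $n=3$ is the instance $k=0$, $r=3$ of the latter. The only step requiring any care is the passage from $A_n$-conjugacy to $S_n$-conjugacy of complements, which is furnished cleanly by Lemma~\ref{ccj}; no serious obstacle arises.
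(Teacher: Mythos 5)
Your argument is correct and follows essentially the same route as the paper's own: reduce via Theorem~\ref{T-compl.} to counting conjugacy classes of order-$2$ complements, identify these with odd involutions classified by the number $m$ of disjoint transpositions, and count the odd $m\le\lfloor n/2\rfloor$. You merely make explicit two points the paper leaves implicit — the passage from $A_n$- to $S_n$-conjugacy (which Lemma~\ref{ccj} indeed supplies, since $A$-conjugates of a complement are $B$-conjugates) and the ``straightforward calculation'' by cases on $n\bmod 4$ — and both check out, including $n=3$ as the case $k=0$ of $n=4k+3$.
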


\begin{thm}{\bf Example.} \label{example.3}\em Let $f:K \to M$ and $g:L \to M$ be group homomorphisms and
$K \times_M L=\{(k,l)\in K \times L:f(k)=g(l)\}$ be the corresponding pullback. Write $\iota$ for the
group homomorphism $B=\textsf{Ker}(g) \to K \times_M L$ that takes $l \in \textsf{Ker}(g)$ to $(\textsf{1}_{K},l)$ and
write $\textit{Hom}^{f}_{g}(K,L)$ for the set
\[
\big\{\text{all homomorphisms}\,\,K \to L \,\,\text{such that}\,\, gh =f\big\}.
\]
It is easy to see that for any $h \in \textit{Hom}^{f}_{g}(K,L)$ and any $l_0 \in \textsf{Ker}(g)$, the map
\[
h^{l_0}=l_0 h  l^{-1}_0=:k \longmapsto l_0 h(k)  l^{-1}_0, \qquad\quad   (k \in K)
\]
 lies again in $\textit{Hom}^{f}_{g}(K,L)$.
Thus the group $\textsf{Ker}(g)$ acts on $\textit{Hom}^{f}_{g}(K,L)$. The quotient of $\textit{Hom}^{f}_{g}(K,L)$
by the action of $\textsf{Ker}(g)$ is denoted by $\textsf{Hom}^{f}_{g}(K,L)$.

For any $h \in \textit{Hom}^{f}_{g}(K,L)$, we write $q_h$ for the map $K \times_ML \to L$
given by $$(k,l) \longmapsto l\cdot h(k^{-1}).$$ Since for any $(k,l)\in K\times_ML$, $f(k)=g(l)$
and hence $g(l)\cdot f(k^{-1})=\textsf{1}_M$, it follows that $g(l \cdot h(k^{-1}))=g(l)\cdot (gh)(k^{-1})=
g(l)\cdot f(k^{-1}))=\textsf{1}_M$. Thus, $l \cdot h(k^{-1}) \in \textsf{Ker}(g)$. Therefore, $q_h$
can be seen as a map $K \times_ML \to \textsf{Ker}(g)$. We then claim that $q_h \in \mathcal{Z}^{1}(\T^l\!\!_\iota,  (B,m_B))$.
To show the claim, it is enough by Proposition \ref{T-str.00} to show that $q_h$ satisfies (ZL2) and (ZL3).
For (ZL2), we have:
\[
\begin{split}
q_h(\iota(l)\cdot (k,l_1))=\\
=& q_h((\textsf{1}_K,l)\cdot (k,l_1))=q_h(k,ll_1)\\
=& (ll_1)\cdot h(k^{-1})= l\cdot(l_1\cdot h(k^{-1}))\\
  =&l \cdot q_h(k,l_1),\qquad\qquad \qquad\qquad  (l \in L, (k, l_1)\in K \times_ML).
\end{split}\]

\noindent To prove that $q_h$ satisfies (ZL3), we make the following calculation:
\begin{align*}
q_h((k_1,l_1)\cdot \iota q_h(k_2,l_2))=\\
&=q_h((k_1,l_1)\cdot (\textsf{1}_K,l_2 \cdot h(k^{-1}_2)))& \text{by definition of}\quad q_h\\
&=q_h((k_1,(l_1l_2) \cdot h(k^{-1}_2)))& \\
&=(l_1l_2) \cdot h(k^{-1}_2)\cdot h(k^{-1}_1)& \text{by definition of}\quad q_h\\
&=(l_1l_2) \cdot h(k^{-1}_2 k^{-1}_1)& \\
&=(l_1l_2)\cdot h((k_1k_2)^{-1})\\
&=q_h(k_1k_2,l_1l_2)&\text{by definition of}\quad q_h\\
&=q_h((k_1,l_1)\cdot (k_2,l_2)).
\end{align*}
Summarizing, we find that the assignment $h \longmapsto q_h$ yields a map
$$ \textit{Hom}^{f}_{g}(K,L) \to \mathcal{Z}^{1}(\T^l\!\!_\iota,  (B,m_B))$$
of sets. We write $\phi$ for this map. It follows from the very definition that $\phi$ is injective.
We next show that $\phi$ induces a map
$$\ol{\phi} : \textsf{Hom}^{f}_{g}(K,L) \to \D^{1}(\T^l\!\!_\iota,  (B,m_B)).$$

\noindent Indeed, suppose that $h, h' \in {Hom}^{f}_{g}(K,L)$ are such that their classes coincide in $\textsf{Hom}^{f}_{g}(K,L)$.
Then there exists an element $l_0\in B\subseteq L$ with $h'=l_0hl_0^{-1}$ (i.e.,
$h'(k)=l_0h(k)l_0^{-1}$ for all $k \in K$). Then for all $(k,l) \in K \times_ML$, we have:
\[
\begin{split}
q_{h'}(k,l)\cdot l_0=lh'(k^{-1})l_0=\\
=&ll_0h(k^{-1})l_0^{-1}l_0=ll_0h(k^{-1})\\
=&q_h(k,ll_0)=q_h((k,l)\cdot \iota(l_0)).
\end{split}\] Here, the first and fourth equality follow from the definition of $q_h$.
It then follows from Proposition \ref{iso-T-str} that $[q_{h}]=[q_{h'}]$ in $\D^{1}(\T^l\!\!_\iota,  (B,m_B))$.

We now investigate under which conditions $\phi$ is bijective. To do this, we consider an
arbitrary $q \in \mathcal{Z}^{1}(\T^l\!\!_\iota,  (B,m_B))$. If $(k,l_1), (k,l_2)\in X=\textsf{Ker}(q)$,
then $f(k)=g(l_1)=g(l_2)$ and hence $g(l_1l_2^{-1})=\textsf{1}_M$.
Thus, $(\textsf{1}_K, l_1l_2^{-1}) \in \iota(B)$. Since $X$ is a subgroup of the group $K \times_ML$,
$(k,l_2)^{-1}=(k^{-1}, l_2^{-1})\in X$. Then
\[
(k,l_1) \cdot (k^{-1}, l_2^{-1})=(\textsf{1}_K, l_1l_2^{-1})\in X.
\]
But since $X$ is a complement to $\iota(B)$ in $K \times_ML$ (see, Proposition \ref{T-compl.01}),
$\iota(B)\cap X=(\textsf{1}_K, \textsf{1}_L)$, implying that $l_1l_2^{-1}=\textsf{1}_L$
and hence $l_1=l_2$. It follows that for any $k \in K$, there exists at most one $l \in L$ with $(k,l) \in X$.
Suppose now that $f(K)\subseteq g(L)$. Then for every $k \in K$, there exists  $l \in L$ with $(k,l) \in K \times_ML$.
It follows -- since $\iota(B)X=K \times_ML$ -- that $(k,l)=(\textsf{1}_K, l')\cdot (k, l_k)$ with $(k, l_k)\in X$.
Thus, for every $k \in K$, there exists a (necessarily unique) element $l_k$ such that $(k, l_k)\in X$
and hence to any $q \in \mathcal{Z}^{1}(\T^l\!\!_\iota,  (B,m_B))$ one can associate a map
 $$h_q: K \to B, \,\, h_q(k)=l_k.$$
Note that if $(k_1,l_{k_1}), (k_2,l_{k_2})\in X$, then $(k_1k_2, l_{k_1}l_{k_2})=(k_1,l_{k_1})\cdot (k_2,l_{k_2})\in X$,
since $X$ is a subgroup of the group $K \times_ML$. It follows that $h_q(k_1k_2)=l_{k_1}l_{k_2}$ and hence $h_q$
is a homomorphism of groups. Consequently, when, as  we  henceforth  suppose, $f(K)\subseteq g(L)$,
one may define a map
$$\phi':\mathcal{Z}^{1}(\T^l\!\!_\iota,  (B,m_B)) \to  \textit{Hom}^{f}_{g}(K,L)$$
by $\phi'(q)= h_q$. We claim that $\phi'$ is the inverse of $\phi$. Indeed, if $h \in {Hom}^{f}_{g}(K,L)$,
then for all $(k,l) \in K \times_ML$, $h_{q_h}(k)=l_k$, where $l_k\in L$ is the unique element such that $q_h(k,l_k)=\textsf{1}_L$.
But since $q_h(k,l)=lh(k^{-1})$, to say that $h_{q_h}(k)=l_k$ is to say that $l_kh(k^{-1})=\textsf{1}_L$,
or, equivalently, that $l_k=h(k)$. Consequently, $h_{q_h}(k)=h(k)$ for all $k \in K$. Thus $\phi'\phi=\textsf{Id}$.
Conversely, if $q\in \mathcal{Z}^{1}(\T^l\!\!_\iota,  (B,m_B))$, then for any $(k,l) \in K \times_ML$,
$q_{h_q}(k,l)=lh_q(k^{-1})=ll_k^{-1}$, where $l_k \in L$ is the unique element such that $q(k,l_k)=\textsf{1}_L$.
Since $(k,l)\in K \times_ML$ and $(k,l_k) \in \textsf{Ker}(q)$, it follows that $f(k)=g(l)=g(l_k)=\textsf{1}_M$.
Then $g(ll_k^{-1})=\textsf{1}_M$ and thus $(\textsf{1}_K, ll_k^{-1})\in \iota(B)$.
As $$(k,l)=(\textsf{1}_K, ll_k^{-1})\cdot (k, l_k)=\iota(ll_k^{-1})\cdot (k, l_k)$$ in $K \times_ML$,
it follows that $q(k,l)=ll_k^{-1}$. Thus $q=q_{h_q}$, proving that $\phi\phi'=\textsf{Id}$. Consequently,
$\phi$ is a bijection. We next show that $\ol{\phi}$ is also a bijection. Indeed, considering the commutative
diagram
\[
\xymatrix{  \textit{Hom}^{f}_{g}(K,L) \ar[d]\ar[r]^{\phi} & \mathcal{Z}^{1}(\T^l\!\!_\iota,  (B,m_B))\ar[d]\\
\textsf{Hom}^{f}_{g}(K,L) \ar[r]_(.40){\ol{\phi}}& \D^{1}(\T^l\!\!_\iota,  (B,m_B)), }
\] where the vertical maps are the canonical surjections, it suffices to show that $\ol{\phi}$ is injective.
So suppose that $h_1,h_2 \in \textit{Hom}^{f}_{g}(K,L)$ are such that $[q_{h_1}]=[q_{h_2}]$ in $\D^{1}(\T^l\!\!_\iota,  (B,m_B))$.
Then there exists an element $l_0 \in B$ with
\[
q_{h_1}(k,l)l_0=q_{h_2}((k,l)\cdot(\textsf{1}_K,l_0))=q_{h_2}(k,ll_0), \qquad  \text{for all}\,\, (k,l)\in K\times_ML
\]
and hence $lh_1(k^{-1})l_0=ll_0 h_2(k^{-1})$ for all $(k,l)\in K\times_ML$. Since for any $k \in K$ there
exists at least one $l \in L$ with $(k,l)\in K\times_ML$, it follows that $h_2=h_1^{l_0}$ and hence $[h_1]=[h_2]$
in $\textsf{Hom}^{f}_{g}(K,L)$. Consequently, $\ol{\phi}$ is bijection.

\medskip
Summing up, we have established:
\medskip

\emph{Let $f:K \to M$ and $g:L \to M$ be group homomorphisms, $B=\emph{\textsf{Ker}}(g)$ and $\iota:B \to K \times_M L$
the group homomorphism defined by $\iota(l)= (\textsf{1}_{K},l)$. Then the assignment $h \to q_h$ yields bijections
$$ \textit{Hom}^{f}_{g}(K,L) \ \simeq \mathcal{Z}^{1}(\emph{\T}^l\!\!_\iota,  (B,m_B))$$
and
$$\emph{\textsf{Hom}}^{f}_{g}(K,L) \to \D^{1}(\emph{\T}^l\!\!_\iota,  (B,m_B)).$$}
\end{thm}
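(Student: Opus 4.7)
My plan is to unpack the assignment $h\mapsto q_h$, where $q_h(k,l)=l\cdot h(k^{-1})$, and construct an explicit two-sided inverse, then descend to equivalence classes. First, I verify that $q_h$ takes values in $B=\textsf{Ker}(g)$: for any $(k,l)\in K\times_M L$ we have $g(l)=f(k)=(gh)(k)$, so $g(l\cdot h(k^{-1}))=\textsf{1}_M$. Since $B$ is a group, Proposition \ref{T-str.00} reduces the check that $q_h\in\mathcal{Z}^1(\T^l\!\!_\iota,(B,m_B))$ to Conditions (ZL2) and (ZL3); both follow from routine computations using $\iota(l)=(\textsf{1}_K,l)$, the multiplication in the pullback $K\times_ML$, and the fact that $h$ is a homomorphism. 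The inversion built into the definition of $q_h$ is precisely what makes (ZL3) come out right. Injectivity of $h\mapsto q_h$ is then immediate, because $h(k^{-1})=q_h(k,\textsf{1}_L)$ recovers $h$ from $q_h$.

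For surjectivity I would invoke Proposition \ref{T-compl.01}: every $q\in\mathcal{Z}^1(\T^l\!\!_\iota,(B,m_B))$ corresponds to a complement $X=\textsf{Ker}(q)$ of $\iota(B)$ in $K\times_ML$. The key observation is that for each $k\in K$ there is at most one $l_k\in L$ with $(k,l_k)\in X$: if $(k,l_1),(k,l_2)\in X$, then $(k,l_1)(k,l_2)^{-1}=(\textsf{1}_K,l_1l_2^{-1})$ lies in $X\cap\iota(B)=\{(\textsf{1}_K,\textsf{1}_L)\}$. Under the (essentially implicit) hypothesis $f(K)\subseteq g(L)$ — without which some $k$ would not appear in the pullback at all — every $k$ does occur as a first coordinate of some element of $K\times_ML$, and factoring such an element as $\iota(b)\cdot x$ with $x\in X$ produces the required $l_k$. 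Setting $h_q(k):=l_k$ defines a homomorphism $K\to L$ (because $X$ is a subgroup) satisfying $gh_q=f$ (because $(k,l_k)\in K\times_M L$), and direct computation shows that $h\mapsto q_h$ and $q\mapsto h_q$ are mutually inverse, giving the first bijection.

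Finally, for the bijection at the level of descent cohomology I apply Proposition \ref{iso-T-str}: two cocycles $q_{h_1},q_{h_2}$ are isomorphic iff there exists an invertible $l_0\in B$ with $q_{h_1}(k,l)\cdot l_0=q_{h_2}((k,l)\cdot\iota(l_0))=q_{h_2}(k,l\cdot l_0)$ for all $(k,l)\in K\times_M L$. Substituting the formula $q_h(k,l)=l\cdot h(k^{-1})$ and cancelling $l$ on the left collapses the condition to $h_1(k^{-1})\cdot l_0=l_0\cdot h_2(k^{-1})$, equivalently $h_1=l_0\, h_2\, l_0^{-1}$, i.e.\ $[h_1]=[h_2]$ in $\textsf{Hom}^{f}_{g}(K,L)$. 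Thus $h\mapsto q_h$ descends to the asserted bijection $\textsf{Hom}^{f}_{g}(K,L)\simeq\D^1(\T^l\!\!_\iota,(B,m_B))$. The main obstacle is not a conceptual one but bookkeeping: keeping sides and inverses consistent throughout, and being explicit about the hypothesis $f(K)\subseteq g(L)$, since without it the inverse map $q\mapsto h_q$ cannot be defined on all of $K$.
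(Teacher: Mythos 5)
Your proposal follows the paper's own argument essentially step for step: the same verification that $q_h$ lands in $\textsf{Ker}(g)$, the same reduction via Proposition \ref{T-str.00} to (ZL2)--(ZL3), the same construction of the inverse $q\mapsto h_q$ through the complement $X=\textsf{Ker}(q)$ of $\iota(B)$ furnished by Proposition \ref{T-compl.01}, and the same use of Proposition \ref{iso-T-str} to descend to $\D^1$. You are also right to insist that the hypothesis $f(K)\subseteq g(L)$ be made explicit for surjectivity; the paper introduces it mid-proof and omits it from its final summary.

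One step as you wrote it would fail: you justify injectivity of $h\mapsto q_h$ by saying that $h(k^{-1})=q_h(k,\textsf{1}_L)$ recovers $h$. But $q_h$ is only defined on the pullback, and $(k,\textsf{1}_L)\in K\times_M L$ forces $f(k)=g(\textsf{1}_L)=\textsf{1}_M$, so this evaluation is available only for $k\in\textsf{Ker}(f)$. The conclusion is still easy to rescue: for any $k\in K$ the pair $(k,h(k))$ \emph{does} lie in $K\times_M L$ (since $gh=f$), and if $q_h=q_{h'}$ then evaluating both at $(k,h(k))$ gives $\textsf{1}_L=q_h(k,h(k))=q_{h'}(k,h(k))=h(k)\,h'(k^{-1})$, whence $h'(k^{-1})=h(k)^{-1}=h(k^{-1})$ and $h=h'$. (The same observation --- that every $k$ occurs as a first coordinate as soon as $\textit{Hom}^{f}_{g}(K,L)\neq\varnothing$ --- is what licenses the cancellation of $l$ in your final paragraph, so it is worth stating once and reusing.) With that repair your argument is complete and coincides with the paper's.
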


\begin{thm}{\bf Remark.} \em
Recently \cite{B-M} and \cite{Me-T} present some interesting applications of our results.
\end{thm}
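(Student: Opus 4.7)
The final statement is not a theorem, lemma, proposition, or claim requiring proof: it is a bibliographic remark asserting that the papers \cite{B-M} and \cite{Me-T} contain applications of the results developed in the preceding sections. There is no mathematical assertion here — no equality, bijection, isomorphism, or structural claim whose validity needs to be established by argument. Accordingly, there is no proof strategy to propose, no key lemmas to assemble, and no obstacle to analyze.

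If pressed to describe what the "verification" of such a remark would look like, it would consist entirely of bibliographic checking: one would open the cited manuscripts, locate the sections in which descent cohomology of monads (as developed in Sections~\ref{mon.}--\ref{comp.}) or the classification of factorizations (Theorem~\ref{class.eq.}, Theorem~\ref{non.ab.coh.gr.ad}) is invoked, and confirm that the applications are genuine. This is a literature-tracing task rather than a mathematical proof, and it cannot be carried out internally to the paper itself.

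In short, no proof plan is warranted for this item. Were the remark instead phrased as a precise mathematical statement — for example, ``the classification in Theorem~\ref{class.eq.} specializes in \cite{B-M} to a description of factorizations of a specific family of groups'' — then a proof sketch would be appropriate, proceeding by specializing the monad $\T^l\!\!_{\imath\!_B}$ to the relevant subgroup inclusion and computing $\D^1(\T^l\!\!_{\imath\!_B},(B,m_B))$ explicitly as in Examples~\ref{example.1}--\ref{example.3}. As worded, however, the statement is purely referential and requires no argument.
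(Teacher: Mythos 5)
Your assessment is correct: this is a purely bibliographic remark with no mathematical content, and the paper accordingly supplies no proof for it. Nothing further is required.
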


\newpage

\end{document}